\newtheorem{definition}{Definition}[section]
\newtheorem{prop}[definition]{Proposition}
\newtheorem{theorem}[definition]{Theorem}
\theoremstyle{remark}
\theoremstyle{remark}
\theoremstyle{remark}
\theoremstyle{remark}
\newtheorem{example}[definition]{Example}
\theoremstyle{remark}
\theoremstyle{remark}
\theoremstyle{remark}
\theoremstyle{remark}
\theoremstyle{remark}
\theoremstyle{definition}
\theoremstyle{definition}
\theoremstyle{remark}
\renewcommand{\marginpar}[2][]{}
\renewenvironment{proof}{\noindent {\bf{Proof.}}}{\hspace*{3mm}{$\Box$}{\vspace{9pt}}}
\begin{document}

\title{Modules as exact functors}
\author{Mike Prest\footnote{The author acknowledges the support of EPSRC grant EP/K022490/1 during the development of this work.},\\ Alan Turing Building
\\School of Mathematics\\University of Manchester\\
Manchester M13 9PL\\UK\\mprest@manchester.ac.uk}

\maketitle

\tableofcontents

\section{Introduction}

This mostly expository paper is about modules, by which I mean the following.

\vspace{4pt}

\noindent A {\bf module} is an exact functor from a small abelian category to the category, ${\bf Ab}$, of abelian groups.

\vspace{4pt}

An {\bf exact functor} is, recall, one which takes exact sequences to exact sequences.

Certainly, this is not the usual definition of a module.  But it is a good alternative, in some senses better.  And it gives a different perspective on modules.  I will explain how this definition arises and we will see where it leads us.  Of course, I should, and will, show that this definition is equivalent to the usual one.

The first step towards this definition of module is into a functor-category-theoretic framework, where we allow rings to be multi-sorted and where modules are just additive functors.  One of the aims of the paper is to show that this all is very concrete and natural, especially through explicit computation of examples.  Another is to show how model theory gives a convenient language for recognising and handling the multi-sorted structures that arise in this framework.  Yet another aim is to reveal implicit definable structure and show how to make this explicit, indeed, that is really the main step in reaching the rather strange-looking definition of ``module" above.

I give most definitions, but the paper is really aimed at a reader who has at least a little background on rings, modules, quivers and representations and has seen some very basic category theory.

The ``narrative" part of the paper proceeds as follows:  modules as additive functors; the free abelian category generated by a ring (as a functor category and {\it via} systems of linear equations); the abelian category associated to a module (a quotient =  localisation of a free abelian category); definable categories (additive categories which include, but are more general than, module categories) and the definable category generated by a module.

Most of the rest of the paper is filling-in/linking/examples/context, as well as explanation of concepts which are less likely to be familiar to all readers:  Serre-localisation; many-sorted structures; definable structure/imaginary sorts; the idea of interpretation from model theory; interpretations regarded as functors; pp formulas; languages for modules.  It is my hope that the reader will be able to dip in and out of the various sections and subsections.\footnote{Thanks to Mike Bushell, Lorna Gregory, Harry Gulliver and Sam Dean for comments on a preliminary version of this paper.}

\vspace{4pt}

I'll start with an example which illustrates many of the main points and which we will gradually develop.

\subsection{Finding modules within modules: an example}\label{secKT}\marginpar{secKT}  This is an example of how essentially the same module structure may be found over very different rings.  By ``essentially the same" I mean something which includes, but is much more general than, Morita equivalence between categories of modules.

Consider the quiver $\widetilde{A_1}$:  $\xymatrix{1 \ar@/^/[r]^\alpha \ar@/_/[r]_\beta & 2}$.  Given a field $K$, a $K$-representation of $\widetilde{A_1}$ is given by two vector spaces $V_1$ and $V_2$ (one for each vertex of the quiver) and two $K$-linear maps $t_\alpha$, $t_\beta$ from $V_1$ to $V_2$ (corresponding to the arrows of the quiver)  $\xymatrix{V_1 \ar@/^/[r]^{t_\alpha} \ar@/_/[r]_{t_\beta} & V_2}$.  One way to build such representations is to take a $K[T]$-module $M$ - that is a $K$-vectorspace with a distinguished endomorphism (the action of $T$) and, from it, define the representation with $V_1=V_2=M$, $t_\alpha = 1_M$ and $t_\beta = T$ (meaning multiplication-by-$T$)  $\xymatrix{M \ar@/^/[r]^1 \ar@/_/[r]_T & M}.$ Note that $t_\alpha$ is an isomorphism.

Recall that the category of $K$-representations of a quiver, such as $\widetilde{A_1}$, is equivalent to the category of modules over the {\bf path} algebra $K\widetilde{A_1}$.  This algebra has a $K$-vectorspace basis consisting of all the paths in the quiver (including a lazy path at each vertex), in this case $e_1, e_2, \alpha, \beta$.  The multiplication is determined by defining the product of two paths to be their composition, written right to left, if defined, otherwise 0.  Thus, for example, $e_1^2=e_1$, $\alpha e_1= \alpha = e_2\alpha$.

From a representation $V$ of $\widetilde{A_1}$ we define the module over $K\widetilde{A_1}$ which has underlying set $V_1\oplus V_2$ and with actions given by $e_1(v_1,v_2)=(v_1,0)$, $e_2(v_1, v_2) =  (0,v_2)$, $\alpha (v_1, v_2) = (0,t_\alpha v_1)$, $\beta (v_1, v_2) = (0, t_\beta v_1)$.

Conversely, from a $K\widetilde{A_1}$-module $N$ we define the representation with vector spaces $e_1N$, $e_2N$ (noting that $N=e_1N\oplus e_2N$ as $K$-vectorspaces) and with maps $t_\alpha$, $t_\beta$ given by $\alpha$ and $\beta$ composed with the relevant injections and projections from and to these components. Combining with the process from the first paragraph, we have a map from $K[T]$-modules to $K\widetilde{A_1}$-modules given by taking $M$ to $M\oplus M$ with the induced action of the path algebra $K\widetilde{A_1}$.  \\ \begin{center} $K[T]$-modules $\rightarrow$ $K$-representations of $\widetilde{A_1}$ $\rightarrow$ $K\widetilde{A_1}$-modules.\end{center}

In the other direction, suppose we are given a $K\widetilde{A_1}$-module $N$ where $\alpha$ restricts to a bijection from $e_1N$ to $e_2N$, equivalently a representation $V$ of $\widetilde{A_1}$ in which $t_\alpha$ is an isomorphism.  Then the vector space $e_1N$, equipped with the endomorphism defined in terms of representations as $t_\alpha^{-1}t_\beta$, is a $K[T]$-module from which the original $K\widetilde{A_1}$-module may be recovered.

If one of these processes is applied and then the other, then the result is isomorphic to the module we began with.  These processes extend to morphisms and are functorial.  They give us an equivalence between the category of $K[T]$-modules and a large subcategory of the category of $K\widetilde{A_1}$-modules.

This is an equivalence which, like Morita equivalence, is very concrete and reversible:  we start with a module, we construct a new module over a different ring and there is a construction in the other direction which recovers the original module.  So, in some sense, the two modules - one over $K[T]$, the other over $K\widetilde{A_1}$ - are ``implicitly the same" structure.  We will make this explicit and precise.  In particular, these will turn out to give isomorphic modules in the sense defined at the start of this paper.

\section{Modules} \label{secmods}\marginpar{secmods}

By ``the usual definition" of a module I mean: a {\bf module} is an additive functor from a skeletally small preadditive category to ${\bf Ab}$.  Perhaps that is stretching the meaning of ``usual" since it is a generalisation, but a mild one (\cite{Mit}), of the first definition one meets - the case where the preadditive category has just one object.  Let us start there, with a ring $R$ which has an identity $1$.

\vspace{4pt}

\noindent {\bf Rings as categories:}   We view $R$ as a category which has a single object, $\ast_R$ say, with the elements of $R$ as the arrows from $\ast_R$ to $\ast_R$.  The addition in $R$ gives an additive structure on this category, that is, the addition on the set $(\ast_R, \ast_R)$ of arrows, and the multiplication in $R$ gives composition of morphisms (to the left).  Thus we regard $R$ as a {\bf preadditive category} - a category where each hom set has an abelian group structure and where composition is bilinear.

\begin{example}\label{KT0}\marginpar{KT0}  Let $K$ be a field and take $R$ to be the ring $K[T]$ of polynomials in one variable with coefficients from $K$.  In the corresponding 1-object preadditive category, $1\in K[T]$ is the identity morphism ${\rm id}_{\ast_{K[T]}}$, $T$ is an endomorphism of $\ast_{K[T]}$,  $T^2$ is the composition of $T$ with itself, indeed every morphism in $(\ast_{K[T]}, \ast_{K[T]})$ is a $K$-linear combination of $1$ and powers of $T$.  Note that, since $K[T]$ is a $K$-algebra, $(\ast_{K[T]}, \ast_{K[T]})$ is actually a $K$-vector space.
\end{example}

\vspace{4pt}

\noindent {\bf Modules as functors:}  An additive functor\footnote{A functor $F$ between preadditive categories is {\bf additive} if $F(f+g) =Ff+Fg$ whenever $f$ and $g$ have the same domain and the same codomain.} $M$ from $R$, regarded as a 1-object category, to ${\bf Ab}$ is given by an abelian group $M(\ast_R)$ and, for each $r\in R$, an endomorphism, which we call multiplication by $r$, of $M(\ast_R)$.  One may check that the condition that $M$ is an additive functor translates to this data defining a left $R$-module structure on $M(\ast_R)$.  Moreover, a natural transformation from the functor $M$ to another additive functor $N$ is a morphism of abelian groups $M(\ast_R) \rightarrow N(\ast_R)$ which commutes with multiplication by $r$ for every $r\in R$ - that is, it is an $R$-linear map from $M$ to $N$.  In this way, the category $(R,{\bf Ab})$ of (covariant) additive functors from $R$ to ${\bf Ab}$ is precisely the category $R\mbox{-}{\rm Mod}$ of left $R$-modules.  Similarly, the category ${\rm Mod}\mbox{-}R$ of right $R$-modules is the category $(R^{\rm op}, {\bf Ab})$ of contravariant functors. (All functors in this paper will be additive, so let us drop that adjective, except when used for emphasis, from now on.)

\begin{example} \label{KT0m}\marginpar{KT0m}
Continuing Example \ref{KT0}, to give a functor $M$ from $K[T]$, considered as a category, to ${\bf Ab}$, it is enough to specify the abelian group $M(\ast_{K[T]})$, the $K$-vectorspace structure on $M(\ast_{K[T]})$ given by the scalar multiplications $M(\lambda)$ for $\lambda \in k$ and the action, $M(T)$, of $T$ on $M(\ast_{K[T]})$.  Which is exactly the data of a $K[T]$-module.
\end{example}

A general preadditive category ${\cal R}$ will have more objects but we assume that, up to isomorphism, there is just a set of these - that is, we assume that ${\cal R}$ is {\bf skeletally small} (${\cal R}$ is {\bf small} if it has just a set of objects).  As in the 1-object case, an (additive) functor from ${\cal R}$ to ${\bf Ab}$ may be referred to as a (left) ${\cal R}$-module.  That is, ${\cal R}\mbox{-}{\rm Mod} \simeq ({\cal R}, {\bf Ab})$ and ${\rm Mod}\mbox{-}{\cal R} \simeq ({\cal R}^{\rm op}, {\bf Ab})$.

\begin{example}\label{A20}\marginpar{A20}  Let $K$ be a field and consider the preadditive (in fact, $K$-linear, see below) category ${\cal R}$ generated by two objects $\ast_1, \ast_2$ and a single arrow $\alpha:\ast_1 \rightarrow \ast_2$ from one to the other.  Thus, $(\ast_1,\ast_1)=K.{\rm id}_{\ast_1}$, $(\ast_1,\ast_2) =K.\alpha$ and $(\ast_2,\ast_2)=K.{\rm id}_{\ast_2}$.  This is the path category (see below) of the quiver $A_2$ which is the directed graph $1 \xrightarrow{\alpha} 2$.

An ${\cal R}$-module $M$ is given by two abelian groups (indeed, as in \ref{KT0}, these will be $K$-vector spaces) $M(\ast_1)$, $M(\ast_2)$ and a ($K$-)linear map $M(\alpha): M(\ast_1) \rightarrow M(\ast_2)$.  That is, an ${\cal R}$-module is a $K$-representation of the quiver $A_2$.  And a natural transformation from one module $M$ to another $N$ is precisely a morphism between the corresponding representations, that is, a pair of maps $f_1:M(\ast_1) \rightarrow N(\ast_1)$, $f_2:M(\ast_2) \rightarrow N(\ast_2)$,  such that, for every $\lambda\in k$, the following diagram commutes.

$\xymatrix{M(\ast_1) \ar[r]^{M(\lambda \alpha)} \ar[d]^{f_1} & M(\ast_2) \ar[d]^{f_2} \\
N(\ast_1) \ar[r]^{N(\lambda \alpha)} & N(\ast_2) }$

In this paper we will use the concept of sorts from model theory, see Section \ref{secmodth}; this is well-illustrated in this example since any ${\cal R}$-module $M$ is naturally a 2-sorted structure, the sorts being $M(\ast_1)$ and $M(\ast_2)$.  Replacing $M$ by the corresponding module over the path algebra would be to replace this two-sorted structure by a one-sorted structure (based on the single sort $M(\ast_1)\oplus M(\ast_2)$).
\end{example}

Like any preadditive category ${\cal R}$ with only finitely many objects (up to isomorphism), the category in the example above is essentially equivalent to a ring $R$ (namely the path algebra) in the sense that the functors from ${\cal R}$ to ${\bf Ab}$ are equivalent to the $R$-modules as usually defined.  But preadditive categories include examples such as the category ${\cal R} = R\mbox{-}{\rm mod}$ of finitely presented modules over any ring (or small preadditive category) $R$.  Then ${\cal R}\mbox{-}{\rm Mod}$, the category $(R\mbox{-}{\rm mod}, {\bf Ab})$ of functors from $R\mbox{-}{\rm mod}$ to ${\bf Ab}$, is almost never equivalent to the category of modules over any 1-object ring (exceptions being von Neumann regular rings and rings of finite representation type).

\vspace{4pt}

\noindent {\bf Functor categories:}  If ${\cal C}$, ${\cal D}$ are preadditive categories with ${\cal C}$ skeletally small (to avoid set-theoretic issues), we use the notation $({\cal C}, {\cal D})$ for the category of additive functors from ${\cal C}$ to ${\cal D}$.  The objects of this category are the functors; the arrows are the natural transformations between functors.  This is itself a preadditive category:  if $\sigma, \tau:F\rightarrow G$ are natural transformations from $F$ to $G$, where $F,G:{\cal C} \rightarrow {\cal D}$ are additive functors, then we define $\sigma + \tau: F\rightarrow G$ to have, for its component $(\sigma+\tau)_C$ at $C\in {\cal C}$, the sum $\sigma_C + \tau_C$ of the components of $\sigma$ and $\tau$ at $C$.  We will always use this notation with ${\cal D}$ an {\bf additive} category, meaning that ${\cal D}$ has a zero object as well as all finite direct sums of objects; in which case $({\cal C}, {\cal D})$ also will be additive.

\begin{example} If ${\cal T}$ is a compactly generated triangulated category then, setting ${\cal T}^{\rm c}$ to be the full subcategory of compact objects (this is skeletally small and additive), we may consider the functor category ${\rm Mod}\mbox{-}{\cal T}^{\rm c} = (({\cal T}^{\rm c})^{\rm op}, {\bf Ab})$ and the restricted Yoneda functor ${\cal T} \rightarrow {\rm Mod}\mbox{-}{\cal T}^{\rm c}$ given by $T\mapsto (-,T)\upharpoonright_{{\cal T}^{\rm c}}$ on objects.  This is an abelian (Grothendieck) approximation of ${\cal T}$ and using it allows techniques and results from abelian categories to be applied in this non-abelian context (see, e.g., \cite{KraTel}, \cite{ALPP}).
\end{example}

\noindent {\bf $K$-linear categories:}  A general point seen in examples above is that, if $K$ is a field, or just a commutative ring, and ${\cal R}$ is a $K$-{\bf linear} category, meaning that each morphism group $(a,b)$ is a $K$-module with composition being $(K,K)$-bilinear, then every additive functor from ${\cal R}$ to ${\bf Ab}$ is $K$-linear.  That is, directly from the definition of a functor, each group $M(a)$ has an induced $K$-module structure and each map $M(f): M(a) \rightarrow M(b)$, where $f\in (a,b)$ is $K$-linear.  Since every additive functor to ${\bf Ab}$ will automatically be a $K$-linear map to a $K$-module, the category $({\cal R}, {\bf Ab})$ is, therefore, naturally equivalent to the category $({\cal R}, K\mbox{-}{\rm Mod})$ of $K$-linear functors to $K$-modules and so, in the definition of ``${\cal R}$-module'', we may take the codomain category to be $K\mbox{-}{\rm Mod}$.

\vspace{4pt}

\noindent {\bf Path categories:} Generalising \ref{A20}, let $Q$ be a quiver and let $K$ be a commutative ring.  We define the $K$-{\bf path category} $K\overrightarrow{Q}$ of $Q$ to have, for objects, the vertices of $Q$ and, given vertices $s,t$ of $Q$, the set $(s,t)$ of arrows is defined to be the free $K$-module on the set of {\bf paths} (concatenations of arrows) from $s$ to $t$.  Composition is given by concatenation of paths when the end of the one path is the start of the next, $0$ when they don't connect, as in the usual definition of the path algebra of a quiver.  This path category is a small preadditive category, the modules over which are the $K$-representations of $Q$.  If the number of vertices of $Q$ is finite then $K\overrightarrow{Q}$ is morally equivalent to the path algebra $KQ$, which is a ring in the usual (one-object) sense, though these are not literally equivalent as categories unless $Q$ has only one vertex.  What are equivalent are their {\bf idempotent-splitting}={\bf karoubian} additive completions.  To get that, we add a $0$ object and form finite direct sums of objects (that gives us the additive category generated by the initial preadditive category), then we add kernels and cokernels of idempotent endomorphisms.

One may check that none of these operations (additive completion, idempotent-splitting completion) on a preadditive category changes its category of modules, at least, not up to equivalence.

\begin{example}  (additive and idempotent-splitting completion) For instance, if our initial preadditive category is a ring $R$, regarded as a 1-object category, then the additive category we form from this has finite (including empty) formal powers of the single object $\ast_R$ of $R$ for its objects.  The morphisms from $(\ast_R)^n$ to $(\ast_R)^m$ will be the $m\times n$ matrices of maps from $\ast_R$ to $\ast_R$, that is $m\times n$ matrices over $R$ (the empty power is the zero object, with only the zero morphisms to and from it).  This is equivalent, through the Yoneda embedding, to the opposite of the category of finitely generated free $R$-modules.  On the other hand, the idempotent-splitting completion is equivalent to the category of finitely generated projective modules.
\end{example}

\begin{example}\label{A3} Consider the quiver $A_3$:  $1 \xrightarrow{\alpha} 2 \xrightarrow{\beta} 3$.  The corresponding $K$-path category has the three objects, labelled say 1, 2, 3, and 1-dimensional morphism spaces between each pair $(i,j)$ of vertices with $i\leq j$, has basis the identity (if the vertices are equal), $\alpha$ (from 1 to 2), $\beta$ (from 2 to 3) and $\beta\alpha$ (from 1 to 3).  All other morphism spaces are zero.  The corresponding path algebra can be represented as the ring of $3\times 3$ lower-triangular matrices over $K$.
\end{example}

\begin{example}\label{A1tilde} In the case of the quiver $\widetilde{A_1}$ $\xymatrix{1 \ar@/^/[r]^\alpha \ar@/_/[r]_\beta & 2}$ the hom space from 1 to 2 is 2-dimensional.  The path algebra can be represented as $\left( \begin{array}{cc} Ke_1 &  0 \\ K\alpha \oplus K\beta & Ke_2 \end{array} \right)$.
\end{example}

\section{The free abelian category}\label{secfreeab}\marginpar{secfreeab}

In what way, then, is a module in the sense defined in Section \ref{secmods} equivalent to an exact functor on a small abelian category?  The equivalence uses Freyd's free abelian category.

\begin{theorem}\label{freeabcat}\marginpar{freeabcat} (\cite[4.1]{FreydLJ}) Let ${\cal R}$ be any skeletally small preadditive category.  There is a full additive embedding $j$ of ${\cal R}$ into a skeletally small abelian category ${\rm Ab}({\cal R})$ defined, up to equivalence, by the property that, given any additive functor $M$ from ${\cal R}$ to ${\bf Ab}$, there is a unique-to-equivalence extension through $j$ to an exact functor $\widetilde{M}: {\rm Ab}({\cal R}) \rightarrow {\bf Ab}$.

$\xymatrix{{\cal R} \ar[d]_M \ar[r]^j & {\rm Ab}({\cal R}) \ar[dl]^{\widetilde{M}} \\ {\bf Ab}}$
\end{theorem}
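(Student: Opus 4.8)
The plan is to construct $\mathrm{Ab}(\mathcal{R})$ explicitly, arrange that the image $j(\mathcal{R})$ consists of projective generators, and then reduce the entire universal property to the behaviour of functors on these generators, where everything is forced by the equation $\widetilde M \circ j = M$. The guiding principle is that in a \emph{freely generated} abelian category a ``formal'' kernel or cokernel is an honest one, so an exact functor out of it is pinned down by its values on representables.

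For the construction and its basic properties I would freely adjoin kernels and cokernels to $\mathcal{R}$. A single finitely-presented-functor step, say $\mathrm{mod}\text{-}\mathcal{R} = \mathrm{fp}(\mathcal{R}^{\mathrm{op}}, \mathbf{Ab})$, already produces a category with cokernels into which $\mathcal{R}$ embeds fully and faithfully by the additive Yoneda functor $R \mapsto (-, R)$, with the representables as projective generators. The first thing to observe is that one step is \emph{not} enough: $\mathrm{mod}\text{-}\mathcal{R}$ has cokernels but in general lacks kernels (it has them precisely when $\mathcal{R}$ has weak kernels). I would therefore use Freyd's (equivalently Adelman's) two-sided construction, iterating the finitely-presented process on both variances so that kernels and cokernels are both freely adjoined; this is where the real labour of the construction lies. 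Granting it, one checks that $\mathrm{Ab}(\mathcal{R})$ is abelian, that the composite $j : \mathcal{R} \to \mathrm{Ab}(\mathcal{R})$ is a full, faithful, additive embedding, that each $jR$ is projective, and the crucial structural fact that every object and every morphism of $\mathrm{Ab}(\mathcal{R})$ is obtained from finite direct sums of objects $jR$ by finitely many kernels and cokernels.

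To define the extension, note that $\widetilde M \circ j = M$ forces $\widetilde M(jR) = M(R)$ and $\widetilde M(jf) = M(f)$, so $\widetilde M$ is already determined on representables and on their finite direct sums. For a general object $X$ I choose a presentation exhibiting $X$ as a formal kernel or cokernel of a map between sums of representables, and I define $\widetilde M(X)$ by performing the \emph{same} kernel/cokernel operation in $\mathbf{Ab}$ on the images under $M$. The two things then to prove are that this is independent of the chosen presentation, hence a well-defined additive functor (standard, using the universal properties of kernels and cokernels together with the fullness of $j$ on morphisms between representables), and that the resulting $\widetilde M$ is exact.

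Exactness is the crux, and it is also the main obstacle: one must send a short exact sequence in $\mathrm{Ab}(\mathcal{R})$ to one in $\mathbf{Ab}$ even though $M$ itself is \emph{not} assumed exact. The resolution is exactly the freeness of the construction — a formal cokernel in $\mathrm{Ab}(\mathcal{R})$ genuinely is a cokernel there and, by the definition of $\widetilde M$, is carried to an honest cokernel in $\mathbf{Ab}$, and dually for kernels; chasing an arbitrary short exact sequence down to its presentation over the representables and applying the snake and five lemmas in $\mathbf{Ab}$ yields exactness. This is precisely why obtaining an honestly abelian category in the construction step, rather than merely one with cokernels, is indispensable. Uniqueness up to natural equivalence then follows formally: any exact $G$ with $G j \cong M$ agrees with $\widetilde M$ on representables, and since every object is built from representables by finitely many kernels and cokernels, the exactness of both functors propagates this agreement into a natural isomorphism $G \cong \widetilde M$. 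As a sanity check I would compare with the ``systems of linear equations'' model, in which objects are pp-pairs $\phi/\psi$ and $\widetilde M$ is literal evaluation $\phi(M)/\psi(M)$; there exactness is visibly built into the behaviour of solution sets of linear systems, at the cost of first erecting the syntactic apparatus.
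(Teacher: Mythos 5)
Your outline follows essentially the same route as the paper, which realises ${\rm Ab}({\cal R})$ as the double finitely-presented-functor category $({\cal R}\mbox{-}{\rm mod},{\bf Ab})^{\rm fp}$ with $j$ the composite of two Yoneda embeddings and extends $M$ in two corresponding stages (first to $A\mapsto (A,M)$ on $({\cal R}\mbox{-}{\rm mod})^{\rm op}$, then to evaluation-at-$M$), though the paper itself only quotes the theorem from Freyd and defers the verification to the literature. Your sketch correctly isolates the two genuine points of labour --- that one finitely-presented step is not enough, and that exactness of $\widetilde{M}$ is the non-formal claim --- so it is consistent with, and no less detailed than, what the paper provides; the only caveats are that the objects $j(p)$ are projective but not generators in the usual ``every object is a quotient of a sum of them'' sense (only abelian generators, via iterated kernels and cokernels), and that a general object needs a two-layer presentation rather than a single formal kernel or cokernel over the representables.
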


We refer to ${\rm Ab}({\cal R})$ as the {\bf free abelian category on} ${\cal R}$.

Before continuing, we recall that if $G:{\cal A} \rightarrow {\cal B}$ is an exact functor between abelian categories then the kernel of $G$, ${\cal S} = \{ A\in {\cal A}: GA=0\}$, is a {\bf Serre subcategory} of ${\cal A}$:  that is, if $0\rightarrow A \rightarrow B\rightarrow C \rightarrow 0$ is an exact sequence in ${\cal A}$ then $B\in {\cal S}$ iff $A, C\in {\cal S}$.  Conversely, if ${\cal S}$ is a Serre subcategory of ${\cal A}$ then there is a {\bf quotient category} ${\cal A}/{\cal S}$, which is abelian, and an exact functor ${\cal A} \rightarrow {\cal A}/{\cal S}$ which has kernel ${\cal S}$ and which is universal for exact functors from ${\cal A}$ with kernel containing ${\cal S}$.  We give a specific computation of this at Example \ref{A21}.

\subsection{The abelian category associated to a module} \label{secassocab}\marginpar{secassocab}

We can do better, obtaining a unique (to equivalence) abelian category associated to a module $M$.  This is, in a very strong sense, an invariant of the module; compare with the ring associated to a module which, one may argue, is defined at most up to Morita equivalence.

The kernel of $\widetilde{M}$, ${\cal S}_M = \{ F\in {\rm Ab}({\cal R}): \widetilde{M}F=0\}$, is a Serre subcategory of ${\rm Ab}({\cal R})$ and there is a factorisation of $\widetilde{M}$ as a composition of exact functors through the quotient category ${\cal A}(M) = {\rm Ab}({\cal R})/{\cal S}_M$.

$\xymatrix{{\cal R} \ar[dd]_M \ar[r]^j & {\rm Ab}({\cal R}) \ar[ddl]^{\widetilde{M}} \ar[dr] \\
& & {\cal A}(M) \ar[dll]^{\widehat{M}} \\  {\bf Ab}}$

This category ${\cal A}(M)$ is the unique-to-equivalence skeletally small abelian category canonically associated to the module $M$.  I will give an interpretation in Sections \ref{secimag} and \ref{secmodth} of $\widetilde{M}$ and $\widehat{M}$ which explains the sense in which the modules seen in Section \ref{secKT}  are ``essentially the same" (in particular they do have the same associated abelian category ${\cal A}(M)$ up to equivalence).

That is how we can produce, from a ``module" in the usual sense, an exact functor on a small abelian category.  To go in the other direction, we start with a skeletally small abelian category ${\cal A}$ and an exact functor $G:{\cal A} \rightarrow {\bf Ab}$.  We use the fact (e.g.~\cite[2.18]{PreADC}) that every such abelian category ${\cal A}$ has the form (non-uniquely) ${\rm Ab}({\cal R})/{\cal S}$ for some skeletally small preadditive ${\cal R}$ and some Serre subcategory ${\cal S}$ of ${\rm Ab}({\cal R})$.  Since the localisation functor ${\rm Ab}({\cal R}) \rightarrow {\rm Ab}({\cal R})/{\cal S}$ is exact, composition with $G$ gives an exact functor on ${\rm Ab}({\cal R})$ and this exact functor restricts to an additive functor, $M$ say, on ${\cal R}$ - that is, a module in the usual sense.  Furthermore, to that additive functor $M$ on ${\cal R}$ we can apply the first process, obtaining $\widehat{M}:{\cal A}(M) \rightarrow {\bf Ab}$ and one may check that this is equivalent to the exact functor, $G'$ say, induced by $G$ on ${\cal A}/{\rm ker}(G)$.  That is, there is an equivalence of categories $\theta:{\cal A}(M) \rightarrow {\cal A}/{\rm ker}(G)$ such that $G'\theta$ is equivalent to $\widehat{M}$.

$\xymatrix{{\cal R} \ar[ddd]_M \ar[r]^j & {\rm Ab}({\cal R}) \ar[dddl]^{\widetilde{M}} \ar[ddr] \ar[dr] \\
& & {\rm Ab}({\cal R})/{\cal S} \simeq {\cal A} \ar[ddll]_{G} \ar[dr] \\ & & {\rm Ab}({\cal R})/{\rm ker}(\widetilde{M}) \simeq {\cal A}(M) \ar[dll]_{\widehat{M}} \ar[r]^{\,\,\,\,\,\,\,\,\,\,\theta} &  {\cal A}/{\rm ker}(G) \ar[dlll]^{G'}\\{\bf Ab}}$

Thus we have the equivalence of the two definitions.

\vspace{4pt}

It is rather more interesting to try to compose these processes in the other direction since it raises the question: if we start with an additive functor $M: {\cal R} \rightarrow {\bf Ab}$, then can we recover $M$ and ${\cal R}$ from $\widehat{M}: {\cal A}(M) \rightarrow {\bf Ab}$?  The answer is that we cannot, not even up to Morita equivalence, the point being that, under our proposed definition, a module may be given in many equivalent but very different ways.  That is, given modules $M: {\cal R} \rightarrow {\bf Ab}$ and $M_1: {\cal R}_1 \rightarrow {\bf Ab}$, if there is an equivalence $H:{\cal A}(M) \rightarrow {\cal A}(M_1)$ such that $\widehat{M_1}H$ and $\widehat{M}$ are equivalent functors to ${\bf Ab}$, then we regard $M$ and $M_1$ as {\bf the same module}.

\subsection{The free abelian category as a functor category} \label{secfreeabfun}\marginpar{secfreeabfun}

The free abelian category on ${\cal R}$, constructed originally in \cite{FreydLJ}, has, following \cite{Adel}, a nice realisation as the category of finitely presented functors on finitely presented ${\cal R}$-modules: ${\rm Ab}({\cal R}) = ({\cal R}\mbox{-}{\rm mod}, {\bf Ab})^{\rm fp}$.  The embedding $j$ of ${\cal R}$ into $({\cal R}\mbox{-}{\rm mod}, {\bf Ab})^{\rm fp}$ is the composition of two Yoneda embeddings, first taking each object $p$ of ${\cal R}$ to the representable functor = projective left ${\cal R}$-module $(p,-)$ (but thought of as an object of $({\cal R}\mbox{-}{\rm mod})^{\rm op}$) and then taking that object to the corresponding representable functor.  The net result is $p \mapsto ((p,-),-)$ with an obvious definition on morphisms.  If, for example, ${\cal R}$ is just a ring $R$ with one object then this object $\ast$ is first sent to $(\ast,-)$, which is just the projective module $_RR$, then is sent on to the forgetful functor $(_RR,-)$ from $R\mbox{-}{\rm mod}$ to ${\bf Ab}$.

The extension process from ${\cal R}$-modules $M: {\cal R}\rightarrow {\bf Ab}$ to exact functors on ${\rm Ab}({\cal R}) = ({\cal R}\mbox{-}{\rm mod}, {\bf Ab})^{\rm fp}$ also is a two-step process which is described in detail elsewhere (e.g.~\cite[\S 4]{PreMAMS}), so I don't repeat it here.  I do, however, give details of one, very simple, example next.  After that, I describe another way of viewing the extension process, using a different incarnation of the free abelian category.

\begin{example} \label{A21}\marginpar{A21}
Take ${\cal R}$ to be the path category of the quiver $A_2$ (Example \ref{A20} but we now write the objects differently).  First consider the functor ${\cal R}^{\rm op} \rightarrow ({\cal R}\mbox{-}{\rm mod})$ (for convenience, I present it this way rather than ${\cal R} \rightarrow ({\cal R}\mbox{-}{\rm mod})^{\rm op}$) given by taking the object $i$ ($i=1,2$) to the representable functor/projective module $(i,-)=P_i$.  Explicitly these are given as follows, where we use $e_i$ for the identity map at $i$ and $\alpha$ for the arrow from $1$ to $2$.

$(1,-)$:  $(1,1)=Ke_1$, $(1,2)=K\alpha$, $(1,1) \xrightarrow{(1,\alpha)} (1,2): e_1 \mapsto \alpha$.

$(2,-)$:  $(2,1)=0$, $(2,2)=Ke_2$

$(\alpha,-): (2,-) \rightarrow (1,-)$:  has components $(\alpha,-)_1 =0 : (2,1)=0 \rightarrow (1,1)=Ke_1$ and $(\alpha,-)_2: (2,2)=Ke_2 \rightarrow (1,2)=K\alpha: e_2 \mapsto \alpha$.

We have the exact sequence $$0 \rightarrow (2,-) \xrightarrow{(\alpha,-)} (1,-) \rightarrow (1,-)/{\rm im}(\alpha,-) \rightarrow 0$$ in ${\cal R}\mbox{-}{\rm mod}$, where the cokernel may be identified as the simple left module $S_1$, given by $S_1(1) =Ke_1$, $S_1(2)=0$.  We write the sequence in more familiar form:  $0 \rightarrow P_2 \xrightarrow{i} P_1 \xrightarrow{\pi} S_1 \rightarrow 0$.

We know that $A_2$ has finite representation type and that this is the complete list of indecomposable modules, with every module being a direct sum of copies of these three modules.  In general, we would have to consider cokernels of arbitrary maps between direct sums of representable functors in order to find all the indecomposables.

That, the construction of ${\cal R}\mbox{-}{\rm mod}$, was the first stage of the construction of the free abelian category ${\rm Ab}({\cal R})$.  Now we just repeat the process, taking each finitely presented left ${\cal R}$-module $M$ to the representable functor $(M,-): {\cal R}\mbox{-}{\rm mod} \rightarrow {\bf Ab}$ and forming cokernels to obtain all the finitely presented functors from ${\cal R}\mbox{-}{\rm mod}$ to ${\bf Ab}$.

Consider, for instance, the exact sequence $0 \rightarrow S_1 \rightarrow P_1 \rightarrow P_2 \rightarrow 0$ in $({\cal R}\mbox{-}{\rm mod})^{\rm op}$.  This goes to the, non-exact, sequence $0 \rightarrow (S_1,-) \xrightarrow{(\pi,-)} (P_1,-) \xrightarrow{(i,-)} (P_2,-) \rightarrow 0$ in $({\cal R}\mbox{-}{\rm mod}, {\bf Ab})^{\rm fp}$.  We can obtain a couple of indecomposable, indeed simple, functors from this sequence.  First, the cokernel $(P_1,-)/{\rm im}(\pi,-)$ of $(\pi,-)$ is the functor given by $P_1 \mapsto K$, $P_2, S_1 \mapsto 0$; second the cokernel of $(i,-)$, which is the functor given on objects by $P_2 \mapsto K$, $P_1, S_1 \mapsto 0$.

In fact, that's it - there are no more that these five indecomposable finitely presented functors from ${\cal R}\mbox{-}{\rm mod}$ to ${\bf Ab}$.  So, in this particular case, we can completely decribe the free abelian category.  We do that in a less bare-hands way, and give a clearer picture of it, at Example \ref{A2lang}.
\end{example}

\subsection{The free abelian category {\it via} systems of linear equations} \label{secfreeabpp}\marginpar{secfreeabpp}

There is a very different-seeming construction of the free abelian category, arising from the model theory of modules, see \cite{KP1}, \cite{HerzDual}, \cite{BurThes} (one can also take a categorical logic approach, see \cite[\S 2.2]{BVCL} in particular).  In this construction the objects of the free abelian category are pairs of projected systems of linear equations; I explain this.

\subsubsection{pp formulas}

Let $R$ be a ring [I will parenthetically say how to modify everything for general preadditive ${\cal R}$].  A {\bf (homogeneous) system of} $R$-{\bf linear equations} is an expression of the form $$\bigwedge_{j=1}^m \, \sum_{i=1}^n \, r_{ij}x_i=0$$  where $x_1, \dots, x_n$ are variables (which can run over the elements of any particular $R$-module) and the $r_{ij}$ are the elements (or corresponding multiplication maps) of $R$.  (The symbol $\bigwedge$ is read ``and", as is $\wedge$, where the use of $\bigwedge/\wedge$ is like that of $\sum/+$.)

[If the preadditive category ${\cal R}$ has more than one object, then each variable has a {\bf sort} - the sorts are labelled by the objects $p$ of ${\cal R}$ and a variable of sort $p$ runs over elements of the group $M(p)$ in any particular ${\cal R}$-module $M$.  So then, for the above expression to make sense, each morphism $r_{ij}$ of ${\cal R}$ must be an arrow from $p$, the sort of $x_i$, to some $q=q(j)$ - so ``0" in the $j$th equation means the zero element in $M(q)$.  For instance, over $A_2$, $(x_1-\lambda x_2=0) \wedge (\alpha x_1 -x_3=0)$, with $\lambda$ some element of $K$, is a system of linear equations, where $x_1$ and $x_2$ both have sort labelled by the object 1 of ${\cal R}$ and $x_3$ has sort labelled by 2.]

The solution set in any module $M$ of such a system of linear equations is a subgroup of $M^n$ (not necessarily a submodule unless $R$ is commutative).  The projection of such a solution set to, say, the first $k\leq n$ coordinates is what is called a {\bf pp-definable subgroup} of $M$ (strictly, a subgroup of $M^k$ pp-definable in $M$).  The terminology ``pp", short for ``positive primitive", refers to the form of the condition $$\exists x_{k+1}, \dots , x_n \, \bigwedge_{j=1}^m \, \sum_{i=1}^n \, r_{ij}x_i=0$$ which defines this set.  If we think of this as a condition on the variables $x_1, \dots, x_k$ then we refer to the condition itself as a {\bf pp condition} or {\bf pp formula}, using notation such as $\phi$ or, if we want to show the free (i.e.~unquantified) variables, $\phi(x_1, \dots, x_k)$. The solution set in $M$ - the corresponding pp-definable subgroup - that is $$\{ (a_1, \dots, a_k)\in M^k: \exists a_{k+1}, \dots, a_n \in M \,\, \sum_{i=1}^n \, r_{ij}a_i=0 \mbox{ for each } j=1, \dots, m\}$$ we write as $\phi(M)$.

[In the many-sorted case, where ${\cal R}$ has more than one object, the solution set of such a pp formula in an ${\cal R}$-module $M$ will be a subset of a finite product of abelian groups each of the form $M(p)$ for $p$ an object of ${\cal R}$.  In the $A_2$ example above, the solution set of the system of two equations is a subgroup of $M(1)\oplus M(1) \oplus M(2)$; if we quantify out the variables $x_1$ and $x_2$, to get the pp formula $\exists x_1, x_2 \, \big((x_1-\lambda x_2=0) \wedge (\alpha x_1 -x_3=0)\big)$, then the solution set of this, in any module $M$, is a subgroup of $M(2)$, namely the image of $\alpha$.]

If $\psi(\overline{x})$ and $\phi(\overline{x})$ are pp formulas with the same free variables, we write $\psi \leq \phi$ if, for every module $M$, $\psi(M)$ is contained in $\phi(M)$, that is, if $\psi$ is a stronger condition than $\phi$.  There is a characterisation (see \cite[1.1.13]{PreNBK}) of this in terms of the coefficients $r_{ij}$ of the respective systems of equations.  If $\psi \leq \phi$ we refer to this as a {\bf pp-pair}, writing $\phi/\psi$.

\subsubsection{The category of pp-pairs}

We associate to any module category ${\cal R}\mbox{-}{\rm Mod}$ the {\bf category}, ${\mathbb L}({\cal R}\mbox{-}{\rm Mod}) = {\mathbb L}_{{\cal R}^{\rm op}}^{\rm eq+}$, {\bf of pp-pairs}.  The objects of this category are the pp-pairs $\phi/\psi$.  The morphisms are the pp-defined maps; literally they are certain equivalence classes of pp formulas.  Precisely, a morphism from $\phi(\overline{x})/\psi(\overline{x})$ to $\phi'(\overline{y})/\psi'(\overline{y})$ is given by a pp formula $\rho(\overline{x},\overline{y})$ which defines a (necessarily additive) map from $\phi(M)/\psi(M)$ to $\phi'(M)/\psi'(M)$ for every module $M$.  The conditions for a pp formula to define a map (rather than just a relation) are simply expressed:  $ \phi(\overline{x}) \leq \exists \overline{y} \,\, \rho(\overline{x}, \overline{y})$, $ \big(\rho (\overline{x},\overline{y}) \wedge \phi (\overline{x})\big) \leq \phi '(\overline{y})$ and $ \big( \rho (\overline{x},\overline{y}) \wedge \psi (\overline{x})\big) \leq \psi '(\overline{y}) $  (in every module).  Since different pp formulas can define the same function in every module, we take such formulas up to this equivalence as the arrows of ${\mathbb L}_{{\cal R}^{\rm op}}^{\rm eq+}$.  All these conditions, though initially they might seem not checkable since they refer to every module, are equivalent to certain ${\cal R}$-bilinear constraints on the coefficients appearing in the formulas involved (use \cite[1.1.13]{PreNBK}).

This category is, in fact \cite{HerzDual}, abelian and is equivalent to the category $({\cal R}\mbox{-}{\rm mod}, {\bf Ab})^{\rm fp}$ (\cite[3.2.5]{BurThes}; see, e.g., \cite[10.2.30]{PreNBK}), hence is an incarnation of ${\rm Ab}({\cal R})$.  The equivalence with the functor category arises as follows.

One easily checks that homomorphisms preserve solution sets of pp formulas so, for any pp formula $\phi$, the assignment $M\mapsto \phi(M)$ defines a functor, which we denote $ F_\phi $, from ${\cal R}\mbox{-}{\rm Mod}$ to $ {\bf Ab}$.  Because (taking the solution set of) a pp formula commutes with direct limits and since every module is a direct limit of finitely presented modules, this functor is determined by its restriction to the subcategory, $ {\cal R}\mbox{-}{\rm mod}$, of finitely presented modules. We will use the same notation, $F_\phi$, for this restriction.  This is a subfunctor of a power of the forgetful functor [in the case that ${\cal R}$ has one object; in the general case, a subfunctor of some product of representable-by-objects-of-${\cal R}$ functors].  One may check (using ``free realisations" see \cite[\S 1.2.2]{PreNBK}) that this functor is finitely generated so, since the functor category $({\cal R}\mbox{-}{\rm mod}, {\bf Ab})^{\rm fp}$ is locally coherent \cite{AusCoh}, $F_\phi$ is finitely presented.   Furthermore, if $ \psi \leq \phi $ then $ F_\psi$ is a subfunctor of $F_\phi  $ and we can form the quotient functor $ F_{\phi /\psi }=F_\phi /F_\psi $, which takes a module $M$ to $\phi(M)/\psi(M)$. As a quotient of finitely presented functors, this is finitely presented.  It may be shown that every finitely presented functor has this form and that, moreover, the natural transformations between finitely presented functors are given by pp formulas (see \cite[10.2.30]{PreNBK}).  This is part of an extensive equivalence between the functorial and model-theoretic views of modules.

\section{The context in which a module sits - definable categories, and the functors between these}

\subsection{Definable categories} \label{secdefcat}\marginpar{secdefcat}

When we define a module in the usual way, thereby fixing the ring ${\cal R}$, it is natural to regard the module as sitting inside the category ${\cal R}\mbox{-}{\rm Mod}$ of all additive functors from ${\cal R}$ to ${\bf Ab}$.  This is a typical Grothendieck abelian category with a generating set of finitely generated projectives.  On the other hand, the definition of a module $M$ as an exact functor on an abelian category ${\cal A}$ naturally places $M$ within the category, ${\rm Ex}({\cal A}, {\bf Ab})$, of all such functors.  The categories which arise this way are the ({\bf exactly}) {\bf definable} ({\bf additive}) {\bf categories}.  These include module categories and all finitely accessible additive categories (see \cite{AdRo}) with products but are more general than that - for example, they may have no finitely presented objects apart from $0$, see Example \ref{inj}.  The fact that there is a canonical choice for ${\cal A}$, given $M$, gives a minimal definable category ${\rm Ex}({\cal A}(M), {\bf Ab})$ in which $M$ lives; we will see below that there are various senses in which this category is generated by $M$.

The fact that definable categories are accessible follows from the Downwards L\"{o}wenheim-Skolem Theorem (see any text on model theory, e.g.~\cite{Hod}).  But accessible categories with products need not be definable; for example a Grothendieck category is definable iff it is finitely accessible \cite[3.6]{PreADC}.  As further examples of (finitely accessible) definable categories, we have the category of comodules over a $K$-coalgebra and, over many schemes, the category of quasicoherent sheaves.

Definable categories first arose as certain subcategories of module categories:  we say that a full subcategory (closed under isomorphism) of a module category ${\cal R}\mbox{-}{\rm Mod}$ is a {\bf definable subcategory} if it satisfies the equivalent conditions of the next result, where an embedding $f:M \rightarrow N$ between left ${\cal R}$-modules is {\bf pure} if for every (finitely presented) right ${\cal R}$-module $L$, the morphism $L\otimes_{\cal R} M \xrightarrow{1_L\otimes f} L\otimes_{\cal R} N$ is an embedding (of abelian groups); $\otimes_{\cal R}$ is illustrated in the example that follows.

\begin{theorem}\label{chardefsub} \marginpar{chardefsub} (see \cite[3.4.7]{PreNBK}) The following are equivalent for a subcategory ${\cal D}$ of ${\cal R}\mbox{-}{\rm Mod}$:

\noindent (i) ${\cal D}$ is closed in ${\cal R}\mbox{-}{\rm Mod}$ under direct products, direct limits and pure submodules;

\noindent (ii) ${\cal D}$ is closed in ${\cal R}\mbox{-}{\rm Mod}$ under ultraproducts, finite direct sums and pure submodules;

\noindent (iii) there is a set $\Phi$ of functors in $({\cal R}\mbox{-}{\rm mod}, {\bf Ab})^{\rm fp}$ such that ${\cal D} = \{ M\in {\cal R}\mbox{-}{\rm Mod} : FM=0 \mbox{ for all } F\in \Phi\}$;

\noindent (iii) there is a set $\Phi$ of pp-pairs (in the language of ${\cal R}$-modules) such that ${\cal D} = \{ M\in {\cal R}\mbox{-}{\rm Mod} : \phi(M)/\psi(M)=0 \mbox{ for all } \phi/\psi \in \Phi\}$.
\end{theorem}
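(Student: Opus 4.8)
The plan is to run the cycle (iii) $\Rightarrow$ (i) $\Rightarrow$ (ii) $\Rightarrow$ (iii), first observing that the two formulations labelled (iii) are interchangeable: by the construction recalled in Section~\ref{secfreeabpp}, every finitely presented functor in $({\cal R}\mbox{-}{\rm mod},{\bf Ab})^{\rm fp}$ has the form $F_{\phi/\psi}$ for a pp-pair $\phi/\psi$, and $F_{\phi/\psi}(M)=\phi(M)/\psi(M)$, so ``$FM=0$ for all $F\in\Phi$'' and ``$\phi(M)/\psi(M)=0$ for all $\phi/\psi\in\Phi$'' cut out literally the same class. I would then argue throughout on the model-theoretic side, in terms of pp-pairs.

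For (iii) $\Rightarrow$ (i) I would use three standard preservation facts: taking the solution set of a pp formula commutes with direct products and with direct limits (so each $F_{\phi/\psi}$ does too), and a pure embedding $M\hookrightarrow N$ satisfies $\phi(M)=M^{k}\cap\phi(N)$ for every pp formula $\phi$. Hence if $\phi(N)=\psi(N)$ then $\phi(M)=\psi(M)$, which gives closure of the annihilated class under pure submodules, direct products and direct limits. For (i) $\Rightarrow$ (ii): finite direct sums are finite direct products and so are covered by closure under products; an ultraproduct is a directed colimit of products (its reduced-product description), so closure under products and direct limits yields closure under ultraproducts; and closure under pure submodules is common to both conditions.

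The substantive direction is (ii) $\Rightarrow$ (iii). I set $\Phi=\{\phi/\psi:\phi(M)=\psi(M)\text{ for all }M\in{\cal D}\}$ and let ${\cal D}'$ be the class it cuts out; then ${\cal D}\subseteq{\cal D}'$ and I must prove the reverse inclusion. Fix $N\in{\cal D}'$. The goal is to build a pure embedding of $N$ into an ultraproduct of members of ${\cal D}$; since ${\cal D}$ is closed under ultraproducts and pure submodules, this forces $N\in{\cal D}$. The engine is a realisation step: given a finite tuple $\overline a$ from $N$, a pp formula $\phi$ with $N\models\phi(\overline a)$, and finitely many pp formulas $\psi_1,\dots,\psi_m$ with $N\not\models\psi_j(\overline a)$, I claim some module in ${\cal D}$ carries a tuple satisfying $\phi\wedge\bigwedge_j\neg\psi_j$. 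Indeed, for each $j$ the pair $\phi/(\phi\wedge\psi_j)$ cannot lie in $\Phi$: if it did it would be closed on $N$, forcing $\overline a\in\phi(N)\subseteq\psi_j(N)$, contrary to $N\not\models\psi_j(\overline a)$. So there is $M_j\in{\cal D}$ with a tuple in $\phi(M_j)\setminus\psi_j(M_j)$, and in the finite direct sum $\bigoplus_j M_j\in{\cal D}$ the coordinatewise tuple lies in $\phi$ (as $\phi$ commutes with finite direct sums) but in no $\psi_j$ (its $j$-th coordinate avoids $\psi_j$).

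To finish I would form the pure diagram of $N$ — all pp facts and negated pp facts true of tuples from $N$, including the linear relations recording the module structure — index its finite fragments $\Delta$, and apply the realisation step to each, obtaining $M_\Delta\in{\cal D}$ realising $\Delta$. Forming the ultraproduct $M^{\ast}=\prod_\Delta M_\Delta/{\cal U}$ along an ultrafilter concentrating on the cofinal sets $\{\Delta'\supseteq\Delta\}$, and sending each element of $N$ to the class of its chosen witnesses, {\L}o\'s's theorem shows the resulting map preserves and reflects all pp formulas, hence is a pure embedding $N\hookrightarrow M^{\ast}$ with $M^{\ast}\in{\cal D}$. The main obstacle is exactly this realisation step: squeezing genuine witnesses in ${\cal D}$ for each negated pp fact out of the bare definition of $\Phi$, and then merging several witnesses into one module — which is precisely where closure under finite direct sums (to combine the $\psi_j$) and under ultraproducts (to pass from finite fragments to the full diagram) are indispensable.
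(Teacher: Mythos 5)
Your proof is correct. The paper does not actually prove this theorem — it only cites \cite[3.4.7]{PreNBK} — and your argument (identifying the two forms of (iii) via $F_{\phi/\psi}(M)=\phi(M)/\psi(M)$, the preservation facts for pp formulas under products, direct limits and pure embeddings for (iii)$\Rightarrow$(i)$\Rightarrow$(ii), and, for (ii)$\Rightarrow$(iii), realising finite fragments of the pure diagram of $N$ in finite direct sums of members of ${\cal D}$ and then purely embedding $N$ into an ultraproduct of these) is essentially the standard proof found in that reference.
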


\begin{example}\label{A2tens}\marginpar{A2tens}  I recall how to define tensor product over a small preadditive category, using the path category, ${\cal R}$, of $A_3$ to illustrate this.

A left ${\cal R}$-module $M$, that is a functor from ${\cal R}$ to $K\mbox{-}{\rm Mod}$, can usefully be identified with its ``image", meaning the corresponding representation $M1 \xrightarrow{M\alpha} M2 \xrightarrow{M\beta} M3$ of $A_3$.  A right module $L$ can be identified with a representation, $L1 \xleftarrow{L\alpha} L2 \xleftarrow{L\beta} L3$, of the opposite quiver.

The first part of the definition of $(-)\otimes_{\cal R}M$ is that its value on a representable right module $(-,i)$ is $Mi$; for instance if $i=1$ then $(-,1)$ is being identified with the representation $K \leftarrow 0 \leftarrow 0$ so $(-,1)\otimes M  = M1$.  Taking $i=2$, hence the representation $K \xleftarrow{1} K \leftarrow 0$, we have $ (-,2)\otimes M= M2$ and, similarly, $ (-,3)\otimes M=M3$.

The rest of the definition of $-\otimes M$ is that it is right exact, which is enough because we've just said how it is defined on the indecomposable projectives.  For instance, if $L$ is $0 \leftarrow K \xleftarrow{1} K$, so has projective presentation $(-,1) \xrightarrow{(-,\alpha\beta)} (-,3) \rightarrow L \rightarrow 0$, then, tensoring with $M$, we obtain the exact sequence $M1 \xrightarrow{M(\alpha\beta)} M3 \rightarrow ML \rightarrow 0$.  So, if $M$ is the representation $K \xrightarrow{1} K \xrightarrow {1} K$ then $L\otimes M = 0$ and, if $M$ is any other indecomposable representation, then $L\otimes M = M3$.
\end{example}

\begin{theorem}\label{defequiv} \marginpar{defequiv} (see \cite[18.1.6]{PreNBK}) A category is exactly definable iff it is equivalent to a definable subcategory of some module category ${\cal R}\mbox{-}{\rm Mod}$.  We say simply that such a category is {\bf definable}.
\end{theorem}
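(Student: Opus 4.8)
The plan is to prove both implications from a single correspondence, obtained by chaining two equivalences. The first is Theorem~\ref{freeabcat} itself: since every additive $M\colon{\cal R}\to{\bf Ab}$ has a unique-to-equivalence exact extension $\widetilde{M}$ along $j$, restriction along $j$ is an equivalence $\mathrm{Ex}\big(\mathrm{Ab}({\cal R}),{\bf Ab}\big)\simeq({\cal R},{\bf Ab})={\cal R}\mbox{-}\mathrm{Mod}$, with quasi-inverse $M\mapsto\widetilde{M}$. The second is the universal property of Serre quotients recalled before Section~\ref{secassocab}: for a Serre subcategory ${\cal S}$ of $\mathrm{Ab}({\cal R})$, precomposition with the exact localisation $\mathrm{Ab}({\cal R})\to\mathrm{Ab}({\cal R})/{\cal S}$ identifies $\mathrm{Ex}\big(\mathrm{Ab}({\cal R})/{\cal S},{\bf Ab}\big)$ with the full subcategory of $\mathrm{Ex}\big(\mathrm{Ab}({\cal R}),{\bf Ab}\big)$ of those exact functors whose kernel contains ${\cal S}$. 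Composing the two, $\mathrm{Ex}\big(\mathrm{Ab}({\cal R})/{\cal S},{\bf Ab}\big)$ is identified with the full subcategory of ${\cal R}\mbox{-}\mathrm{Mod}$ consisting of those $M$ for which $\ker\widetilde{M}\supseteq{\cal S}$.

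For the implication that a definable subcategory is exactly definable, I would start with definable ${\cal D}\subseteq{\cal R}\mbox{-}\mathrm{Mod}$ and apply Theorem~\ref{chardefsub}(iii) to obtain a set $\Phi$ of objects of $\mathrm{Ab}({\cal R})=({\cal R}\mbox{-}\mathrm{mod},{\bf Ab})^{\rm fp}$ with ${\cal D}=\{M:FM=0\text{ for all }F\in\Phi\}$. Under the identification of $F\in\mathrm{Ab}({\cal R})$ with its direct-limit-preserving extension to ${\cal R}\mbox{-}\mathrm{Mod}$ one has $FM=\widetilde{M}(F)$, so $M\in{\cal D}$ iff $\Phi\subseteq\ker\widetilde{M}$. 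Because $\widetilde{M}$ is exact, $\ker\widetilde{M}$ is a Serre subcategory, whence $\Phi\subseteq\ker\widetilde{M}$ iff ${\cal S}\subseteq\ker\widetilde{M}$, where ${\cal S}$ is the Serre subcategory generated by $\Phi$. Thus, under the correspondence of the previous paragraph with ${\cal A}=\mathrm{Ab}({\cal R})/{\cal S}$, the subcategory ${\cal D}$ is exactly the one matched to $\mathrm{Ex}({\cal A},{\bf Ab})$, giving ${\cal D}\simeq\mathrm{Ex}({\cal A},{\bf Ab})$; since ${\cal A}$ is skeletally small abelian, ${\cal D}$ is exactly definable.

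The reverse implication is the same correspondence read backwards, fed by the structure theorem \cite[2.18]{PreADC}: given a skeletally small abelian ${\cal A}$, write ${\cal A}\simeq\mathrm{Ab}({\cal R})/{\cal S}$, let $\Phi$ be any generating set for ${\cal S}$, and set ${\cal D}=\{M:FM=0\text{ for all }F\in\Phi\}$, which is definable by Theorem~\ref{chardefsub}. The computation of the previous paragraph then yields $\mathrm{Ex}({\cal A},{\bf Ab})\simeq{\cal D}$. The main obstacle is not any single hard estimate but checking that these identifications are genuine equivalences of (large) categories rather than bijections on objects: one must confirm that restriction along $j$ and precomposition with the localisation act correctly on natural transformations and compose functorially. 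Most of this is already packaged into the uniqueness-to-equivalence clause of Theorem~\ref{freeabcat} and into the universal property of the quotient; the one point specific to this argument is the equivalence of the two kernel conditions $\Phi\subseteq\ker\widetilde{M}$ and ${\cal S}\subseteq\ker\widetilde{M}$, which rests precisely on the fact that kernels of exact functors are Serre subcategories.
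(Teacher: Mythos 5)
Your argument is correct and is exactly the route the paper's own machinery points to: it chains the universal property of the free abelian category (Theorem~\ref{freeabcat}), the universal property of Serre quotients, the characterisation in Theorem~\ref{chardefsub}(iii), and the representation ${\cal A}\simeq{\rm Ab}({\cal R})/{\cal S}$ from \cite[2.18]{PreADC}, which is precisely the back-and-forth sketched in Section~\ref{secassocab} and carried out in the cited source \cite[18.1.6]{PreNBK}. The one step you rightly isolate --- replacing $\Phi\subseteq\ker\widetilde{M}$ by ${\cal S}\subseteq\ker\widetilde{M}$ because kernels of exact functors are Serre --- is handled correctly, so there is nothing to add.
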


If $M \in {\cal R}\mbox{-}{\rm Mod}$ then write $\langle M \rangle$ for the definable subcategory of ${\cal R}\mbox{-}{\rm Mod}$ {\bf generated by} $M$ (in the sense of being the smallest definable subcategory which contains $M$).  Every definable category has a generator in this sense, indeed an {\bf elementary cogenerator}, meaning an object $N$ such that every $M\in {\cal D}$ purely embeds in a direct product of copies of $N$.  We say that an object $N$ is {\bf pure-injective} if it is injective over pure embeddings.

\begin{theorem} (see \cite[\S 3.4,5.3.52]{PreNBK}) Every definable category has the form $\langle M \rangle$ for some module $M$.  Since every definable category is closed under pure-injective hulls, we may take $M$ to be pure-injective.  Indeed every definable category has an elementary cogenerator, which may be taken to be a direct product of indecomposable pure-injectives, alternatively the pure-injective hull of a direct sum of indecomposable pure-injectives.
\end{theorem}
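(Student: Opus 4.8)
The plan is to read off the definable category from the pp-pairs (equivalently, the finitely presented functors of Theorem~\ref{chardefsub}) that are closed on it, and then to manufacture a single module whose closed pp-pairs are exactly these. By Theorem~\ref{defequiv} I may assume the category is presented as a definable subcategory ${\cal D}$ of some ${\cal R}\mbox{-}{\rm Mod}$. By Theorem~\ref{chardefsub}, ${\cal D}$ is cut out by the set of pp-pairs $\phi/\psi$ for which $F_{\phi/\psi}$ vanishes on ${\cal D}$, and the definable subcategory generated by a module is cut out in the same way by the pp-pairs vanishing on it. Since $M\in{\cal D}$ forces every pp-pair closed on ${\cal D}$ to be closed on $M$, one has $\langle M\rangle={\cal D}$ precisely when, conversely, every pp-pair that is open (nonzero) somewhere in ${\cal D}$ is already open on $M$.

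First I would build such a generator. Up to equivalence of pp formulas there is only a set of pp-pairs, so for each pp-pair $\phi/\psi$ open on ${\cal D}$ I choose a witness $N_{\phi/\psi}\in{\cal D}$ with $\phi(N_{\phi/\psi})/\psi(N_{\phi/\psi})\neq 0$, and by Ziegler's theorem (\cite{PreNBK}) this witness may be taken to be an indecomposable pure-injective. Set $N=\prod_{\phi/\psi}N_{\phi/\psi}$; this is a legitimate object because the indecomposable pure-injectives in ${\cal D}$ form a set, by the cardinality bound coming from the Downward L\"{o}wenheim--Skolem theorem. Since evaluating a pp-pair commutes with direct products, $F_{\phi/\psi}(N)=\prod F_{\phi/\psi}(N_{\phi/\psi})$, so every pp-pair open on ${\cal D}$ is open on $N$; and $N\in{\cal D}$, because ${\cal D}$ is closed under products, so no further pp-pair is opened. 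By Theorem~\ref{chardefsub}, $\langle N\rangle={\cal D}$. A product of pure-injectives is pure-injective, which already yields the pure-injective generator; alternatively, an arbitrary generator $M$ may be replaced by its pure-injective hull $H(M)$, using that ${\cal D}$ is closed under pure-injective hulls (so $H(M)\in{\cal D}$) and that $M$ is a pure submodule of $H(M)$ (so each lies in the definable category generated by the other), whence $\langle M\rangle=\langle H(M)\rangle$.

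It remains to see that $N$ is an elementary cogenerator, i.e.\ that every $M\in{\cal D}$ purely embeds in a power of $N$. The key step is a realization lemma: every pp-type $p$ realized by a tuple in some member of ${\cal D}$ is realized exactly in a power of $N$. For each pp formula $\phi\notin p$ I use Ziegler's theorem to produce an indecomposable pure-injective in ${\cal D}$ carrying a tuple that satisfies all of $p$ but fails $\phi$; placing these tuples in separate coordinates and $0$ elsewhere (the zero tuple satisfies every pp formula, so is harmless) and taking the product over all $\phi\notin p$ exhibits a tuple in a power of $N$ with pp-type exactly $p$. Now given $M\in{\cal D}$, for each finite tuple $\bar{a}$ from $M$ I realize ${\rm pp}^M(\bar{a})$ exactly in a power $N^{J_{\bar a}}$; since $N$, hence $N^{J_{\bar a}}$, is pure-injective, the assignment of $\bar a$ to that realization, having the same pp-type, extends to a homomorphism $g_{\bar a}:M\to N^{J_{\bar a}}$. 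The assembled map $g=(g_{\bar a})_{\bar a}:M\to N^{\bigsqcup_{\bar a}J_{\bar a}}$ is injective (if $a\neq 0$ then $x=0$ is not in ${\rm pp}^M(a)$, so the $a$-component of $g(a)$ is nonzero) and reflects pp formulas (if $M\not\models\phi(\bar a)$ then the $\bar a$-block already fails $\phi$), so it is a pure embedding. The two stated forms then agree: $\bigoplus_i N_i$ embeds purely in $\prod_i N_i$, which is pure-injective, so the pure-injective hull of $\bigoplus_i N_i$ is a direct summand of $\prod_i N_i$ and is likewise an elementary cogenerator.

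I expect the realization lemma to be the main obstacle: it is exactly here that one needs Ziegler's theorem, in the form that every pp-pair open on ${\cal D}$ is opened by an indecomposable pure-injective point, together with the set-hood of the indecomposable pure-injectives (without which the products are not objects). The other genuine input, that a partial map preserving pp-types extends to a homomorphism into a pure-injective (algebraic compactness), is standard (\cite{PreNBK}). The earlier clauses of the statement, by contrast, require only the product construction and the closure of ${\cal D}$ under products and under pure-injective hulls.
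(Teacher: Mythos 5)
The paper itself gives no argument for this theorem (it is stated with a citation to \cite{PreNBK}), so the only question is whether your proof stands on its own. Your first two clauses do: the product, over the set of pp-pairs open on ${\cal D}$, of one witness apiece lies in ${\cal D}$ (closure under products), opens every pp-pair that is open somewhere in ${\cal D}$ (evaluation of pp-pairs commutes with products), and hence generates ${\cal D}$ by \ref{chardefsub}; and a product of pure-injectives is pure-injective. That part is correct and is the standard argument.

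The elementary cogenerator clause has a genuine gap. Your realization lemma needs, for a pp-type $p$ realized in ${\cal D}$ and $\phi\notin p$, a tuple in a \emph{single factor} of $N$ whose pp-type contains $p$ and omits $\phi$ (a coordinate witnessing the failure of $\phi$ for a tuple in a power of $N$ must itself omit $\phi$ while its pp-type still contains $p$). Ziegler's theorem hands you an indecomposable pure-injective in ${\cal D}$ carrying such a tuple, but nothing forces that module to occur among your chosen factors $N_{\phi/\psi}$: picking one witness per open pp-pair controls only the finitely generated pp-types, and ``elementary cogenerator'' is a strictly stronger property than ``generator of the definable category''. Indeed the conclusion can fail for your $N$. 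Take ${\cal D}$ to be the torsionfree abelian groups, whose indecomposable pure-injectives are the $\widehat{{\mathbb Z}_p}$ and ${\mathbb Q}$. Since ${\mathbb Q}$ lies in the definable subcategory generated by the $\widehat{{\mathbb Z}_p}$, every pp-pair open on ${\cal D}$ is already opened by some $\widehat{{\mathbb Z}_p}$, so you may legitimately choose all your witnesses among the adic modules; but any product of copies of the $\widehat{{\mathbb Z}_p}$ is reduced, so ${\mathbb Q}\in{\cal D}$ admits no embedding, let alone a pure one, into any power of that $N$. The repair is to take $N$ to be the product of one copy of each isomorphism class of indecomposable pure-injective in ${\cal D}$ --- still a set, by the same Downward L\"{o}wenheim--Skolem bound you invoke --- after which your realization argument and the assembly of the pure embedding of each $M\in{\cal D}$ into a power of $N$ go through verbatim; this is the form in which the cited result of \cite{PreNBK} is proved.
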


\begin{example} Within the category ${\mathbb Z}\mbox{-}{\rm Mod} = {\bf Ab}$ of abelian groups, examples of definable subcategories are:

\noindent $\langle {\mathbb Z}\rangle$, the torsionfree abelian groups;

\noindent $\langle {\mathbb Q}/{\mathbb Z}\rangle$, the category of divisible abelian groups;

\noindent $\langle {\mathbb Q}\rangle$, the intersection of these;

\noindent $\langle \overline{{\mathbb Z}_{(p)}}\rangle =  \langle {\mathbb Z}_{(p)}\rangle$, the localisation of ${\bf Ab}$ at any nonzero prime $p$;

\noindent $\langle A\rangle$, the category of direct sums of copies of direct summands of $A$, where $A$ is any finite abelian group.

There are further definable subcategories of ${\bf Ab}$ and these are, as are the definable subcategories of any definable category, in natural bijection with the closed sets of the associated Ziegler spectrum.  That is a topological space with points being the isomorphism classes of indecomposable pure-injectives and with topology defined using pp-pairs, as originally introduced by Ziegler \cite{Zie}, but which can be defined in a variety of ways, see \cite[\S\S 5.1, 12.4]{PreNBK}.

The category of torsion abelian groups is not a definable subcategory of ${\bf Ab}$ though, being a finitely accessible category with products (the finite groups are its finitely presented $\varinjlim$-generators) it is a definable category (which is not `definably embedded' in ${\bf Ab}$).
\end{example}

\subsection{The intrinsic structure of a definable category} \label{defint}\marginpar{defint}

If ${\cal D}$ is a definable subcategory of a module category ${\cal R}\mbox{-}{\rm Mod}$ then the usual pure-exact structure on the latter restricts to an exact structure on ${\cal D}$ which is, in fact, intrinsic to ${\cal D}$.  That follows directly from the fact that a sequence $0 \rightarrow A \rightarrow B \rightarrow C \rightarrow 0$ in ${\cal D}$ is pure-exact in the sense of ${\cal R}\mbox{-}{\rm Mod}$ iff some ultraproduct of it is split.  Since ultraproducts are direct limits of products, and ${\cal D}$ is closed in ${\cal R}\mbox{-}{\rm Mod}$ under these operations, this definition of purity may be used without any reference to such a containing category ${\cal R}\mbox{-}{\rm Mod}$.  Thus there is an intrinsic theory of purity in any definable category.

Such a category ${\cal D}$ also has an intrinsic model theory, with language based on the category ${\cal A}(M)$ where $M$ is any generator for ${\cal D}$ as a definable category.  This category does not depend on choice of generator $M$, being equivalent to the category $({\cal D}, {\bf Ab})^{\prod \rightarrow}$ of additive functors from ${\cal D}$ to ${\bf Ab}$ which commute with direct products and direct limits (\cite[12.10]{PreMAMS}).  We write ${\rm fun}({\cal D})$ for this skeletally small abelian category.  So, in the case of a module category, we have the following ways of obtaining the free abelian category of a ring.

\begin{theorem}\label{equiv3}\marginpar{equiv3} For any skeletally small preadditive category ${\mathcal R}$ the categories $ ({\cal R}\mbox{-}{\rm mod},{\bf Ab})^{\rm fp}$, ${\mathbb L}^{\rm eq+}_{{\mathcal R}^{\rm op}}$, ${\rm Ab}({\mathcal R})$ and ${\rm fun}\mbox{-}{\mathcal R}^{\rm op} = ({\cal R}\mbox{-}{\rm Mod}, {\bf Ab})^{\prod \rightarrow}$ are equivalent.
\end{theorem}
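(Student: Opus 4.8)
The plan is to show that all four categories are just different incarnations of Freyd's free abelian category ${\rm Ab}({\cal R})$, by chaining together equivalences, three of which have already been recalled in the preceding subsections. Adelman's realisation gives ${\rm Ab}({\cal R}) \simeq ({\cal R}\mbox{-}{\rm mod},{\bf Ab})^{\rm fp}$ (\cite{Adel}), and the functor-versus-model-theory dictionary gives ${\mathbb L}^{\rm eq+}_{{\cal R}^{\rm op}} \simeq ({\cal R}\mbox{-}{\rm mod},{\bf Ab})^{\rm fp}$, the equivalence sending a pp-pair $\phi/\psi$ to the finitely presented functor $F_{\phi/\psi}$ and a pp-defined map to the corresponding natural transformation (\cite{HerzDual}, \cite[3.2.5]{BurThes}, \cite[10.2.30]{PreNBK}). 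So the only genuinely new link to establish is ${\rm fun}\mbox{-}{\cal R}^{\rm op} = ({\cal R}\mbox{-}{\rm Mod},{\bf Ab})^{\prod\rightarrow} \simeq ({\cal R}\mbox{-}{\rm mod},{\bf Ab})^{\rm fp}$; once that is in place I would simply remark that the displayed functors agree on objects and morphisms, so the composite is a coherent chain of equivalences rather than an abstract isomorphism of categories.

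For that last link I would use restriction along the inclusion $i:{\cal R}\mbox{-}{\rm mod}\hookrightarrow {\cal R}\mbox{-}{\rm Mod}$, $F\mapsto F\upharpoonright_{{\cal R}\mbox{-}{\rm mod}}$. Since ${\cal R}\mbox{-}{\rm Mod}$ is locally finitely presented with finitely presented objects precisely ${\cal R}\mbox{-}{\rm mod}$, every module is a direct limit of finitely presented ones; hence any $F$ commuting with direct limits is the left Kan extension of $F\upharpoonright$, and restriction identifies the functors ${\cal R}\mbox{-}{\rm Mod}\rightarrow{\bf Ab}$ commuting with direct limits with all additive functors on ${\cal R}\mbox{-}{\rm mod}$. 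It then remains to match the extra condition of commuting with direct products against the condition of being finitely presented. One direction is direct: a finitely presented $G$ has a presentation $(B,-)\rightarrow(A,-)\rightarrow G\rightarrow 0$ with $A,B\in{\cal R}\mbox{-}{\rm mod}$; the left Kan extension preserves cokernels and sends the restricted representables back to the representables $(A,-),(B,-)$ on the big category, which preserve products; and since products are exact in ${\bf Ab}$, cokernels commute with products there, so the extension $\vec{G}$ preserves products.

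The other direction is the crux and the place I expect the real work: showing that if $F$ preserves both products and direct limits then $F\upharpoonright$ is finitely presented. The cleanest route is to identify such $F$ with pp-pair functors. Pp formulas commute with both direct limits (as already noted) and direct products, so every $F_{\phi/\psi}$ lies in $({\cal R}\mbox{-}{\rm Mod},{\bf Ab})^{\prod\rightarrow}$; the substantive claim is the converse, that a product- and limit-preserving functor is pp-definable, and this is exactly what is packaged in \cite[12.10]{PreMAMS}, which gives ${\rm fun}({\cal D})\simeq({\cal D},{\bf Ab})^{\prod\rightarrow}$ for any definable ${\cal D}$. Applying this with ${\cal D}={\cal R}\mbox{-}{\rm Mod}$ and observing that the only finitely presented functor vanishing on every module is $0$ — so the associated Serre subcategory is trivial and ${\rm fun}({\cal R}\mbox{-}{\rm Mod})\simeq{\rm Ab}({\cal R})/0\simeq{\rm Ab}({\cal R})$ — closes the loop back to the three equivalences above.
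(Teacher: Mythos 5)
Your proposal is correct and follows essentially the same route as the paper, which likewise assembles the theorem from the three cited equivalences: Adelman's realisation ${\rm Ab}({\cal R})\simeq({\cal R}\mbox{-}{\rm mod},{\bf Ab})^{\rm fp}$, the Burke--Herzog identification of ${\mathbb L}^{\rm eq+}_{{\cal R}^{\rm op}}$ with the finitely presented functors, and \cite[12.10]{PreMAMS} for the product-and-direct-limit-preserving functors. The extra detail you supply on the easy direction of the last link (Kan extension along ${\cal R}\mbox{-}{\rm mod}\hookrightarrow{\cal R}\mbox{-}{\rm Mod}$ and exactness of products in ${\bf Ab}$) is sound, and you correctly locate the substantive converse in the cited reference rather than claiming to reprove it.
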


The canonical (multi-sorted) language for a definable category ${\cal D}$ has one sort for each object of ${\rm fun}({\cal D})$, an abelian group structure on each sort, and a function symbol for each arrow of ${\rm fun}({\cal D})$.  This general kind of language is discussed in Section \ref{secmodth}; we consider a specific example here.

\begin{example}\label{A2lang}\marginpar{A2lang}  We recompute the functor category ${\rm fun}({\cal D})$ of the category of modules ${\cal D} = KA_2\mbox{-}{\rm Mod}$ over the path category of $A_2$, using some more general theory than we did with the direct computations at Example \ref{A21}.

We use the fact that, for a ring $R$ of finite representation type, such as $KA_2$, the functor category is equivalent (see, e.g., \cite[4.9.4]{BenBk1}, \cite[\S VI.5]{ARS}) to the category of right modules over the {\bf Auslander algebra} of $ R $; this is the endomorphism ring $  S={\rm End}(M) $ where $ M $ is a direct sum of one copy of each of the indecomposable (finitely generated) $R$-modules.  In our example, and continuing the notation of Example \ref{A21}, the Auslander algebra is $ S={\rm End}(P_1\oplus P_2 \oplus S_1)$.  Then, by direct computation (let $S$ act on the right as a $3\times 3$ matrix ring and decompose it into indecomposable projectives = rows), $S$ is the path algebra of the quiver $\bullet_1 \xleftarrow{\pi} \bullet_2 \xleftarrow{i} \bullet_3$ with relation $i\pi =0$.  The Auslander-Reiten quiver of ${\rm Mod}\mbox{-}S$ is the following, where $T_i, Q_i, J_i$ are respectively the simple module at vertex $i$, the projective cover of $T_i$, the injective hull of $T_i$.

$\xymatrix{ &  Q_2=J_1 \ar[dr] & & Q_3=J_2 \ar[dr] \\  Q_1=T_1 \ar[ur] \ar@{--}[rr]
 & & T_2 \ar[ur] \ar@{--}[rr] & & T_3=J_3}$

One can check\footnote{For example, first note that $Q_2$ is the functor $(P_1,-)$, hence has action as given, and similarly for $Q_3$.  Then, hom the exact sequence $0\rightarrow P_2 \xrightarrow{i} P_1 \xrightarrow{\pi} S_1 \rightarrow 0$ into $M$ to get that $Q_1(M) = (S_1,M) = {\rm ker}(i,M)$ which, since $i$ is just multiplication by $\alpha$, gives the pp-description of this functor.  Since $T_2$ is the cokernel of the embedding of $Q_1$ into $Q_2$, it can be described as the quotient $M\mapsto e_1M/\{ m\in e_1M: \alpha m=0\}$, which is isomorphic (definably, by multiplication by $\alpha$) to $\alpha M$.  And the description of $T_3$ is also immediate.} that these are, as functors and as pp-pairs, as follows (where $M$ can be the module above or, since the action on $M$ determines that on any module, arbitrary):

\noindent $Q_1=(S_1,-)= (\alpha x=0)/(x=0): M\mapsto \{ m\in e_1M: \alpha m=0\}$;

\noindent $Q_2 =(P_1,-) = (x=e_1x)/(x=0): M \mapsto e_1M$;

\noindent $T_2 =(P_1,-)/{\rm im}(\pi,-)\simeq (P_1,-)/(S_1,-) \simeq (\alpha|x)/(x=0): M \mapsto \alpha M$;

\noindent $Q_3=(P_2,-) =(x=e_2x)/(x=0):M\mapsto e_2M$;

\noindent $T_3= (P_2,-)/{\rm im}(i,-)\simeq (P_2,-)/(P_1,-) = (x=e_2x)/(\alpha|x): M\mapsto e_2M/\alpha M$.

In particular $S$ is of finite representation type and so there are just these 5 indecomposable pp-pairs, with every pp sort, that is object of ${\mathbb L}_{(KA_2)^{\rm op}}^{\rm eq+}$ being a direct sum of copies of these.  In general, even if $R$ is of finite representation type, the Auslander algebra of $R$ need not be, so this is a very pleasant situation in which the category of pp-sorts can be described completely.  A similar computation, for the algebra $K[\epsilon: \epsilon^2=0]$, can be found in \cite[\S 6.8]{Perera}.
\end{example}

\subsection{Localisation} \label{secloc}\marginpar{secloc}

Localisation/relativisation is relevant when we move from a definable category to a definable subcategory.  The restriction/relativisation of functors (that is, pp-pairs) to the subcategory exactly corresponds to Serre localisation of the associated functor categories.

Precisely, suppose that $ {\mathcal D}$ is a definable subcategory of, say, ${\cal R}\mbox{-}{\rm Mod}$.  By $ {\mathbb L}^{\rm eq+}({\mathcal D}) $ we denote the category of pp-pairs for $ {\mathcal D} $.  The objects of $ {\mathbb L}^{\rm eq+}({\mathcal D}) $ may be taken to be the same as those of ${\mathbb L}_{{\cal R}^{\rm op}}^{\rm eq+}$ (alternatively, the pp-pairs obtained from the intrinsic model theory of ${\mathcal D}$).  The morphisms from sort $ \phi /\psi  $ to $ \phi '/\psi ' $ are the equivalence classes, meaning equivalence {\em on modules in} $ {\mathcal D}$, of pp formulas which define a relation from the group defined by $ \phi /\psi  $ to that defined by $ \phi '/\psi ' $ which is total and functional in every $ D\in {\mathcal D}$.

\begin{example}  Take ${\cal D}$ to be the definable subcategory ${\mathbb Z}_{(2)}\mbox{-}{\rm Mod}$ of ${\mathbb Z}\mbox{-}{\rm Mod}$.  Since every prime other than 2 is invertible when acting on the objects of ${\cal D}$, the sort $(y=y)/(3|y)$ must be isomorphic to the zero object of $ {\mathbb L}^{\rm eq+}({\mathcal D}) $.  Represent the zero object as, say, $(x=x)/(x=x)$.  The pp formula $\rho(x,y)$ which is just $y=x$, regarded as defining a relation from $(x=x)/(x=x)$ to $(y=y)/(3|y)$ is certainly not functional on ${\mathbb Z}\mbox{-}{\rm Mod}$ - if $M$ is a ${\mathbb Z}$-module then $\rho$ relates the only element of $M/M$ to every element of $M/3M$ - but it is functional on modules in ${\cal D}$ since, for these, $M/3M=0$.  For an example of an arrow of $ {\mathbb L}^{\rm eq+}({\mathcal D}) $ which is not in $ {\mathbb L}^{\rm eq+}({\mathbb Z}\mbox{-}{\rm Mod})$, take multiplication by the inverse of $3$ on any sort, say on $(x=x)/(x=0)$ where the relation defined by the formula $\sigma(x,y)$, being $3y=x$, gives a new arrow.
\end{example}

\begin{theorem}\label{ppsortquotpp}\marginpar{ppsortquotpp} (see \cite[12.3.20]{PreNBK}) If $ {\mathcal D} $ is a definable subcategory of $ {\mathcal R}\mbox{-}{\rm Mod} $ and $ {\mathcal S}_{{\mathcal D}} $ denotes the full subcategory of $ {\mathbb L}^{\rm eq+}_{{\mathcal R}^{\rm op}}$ on those pp-pairs which are {\bf closed on} (i.e.~zero when evaluated on) ${\mathcal D}$ then the quotient category $  {\mathbb L}^{\rm eq+}_{{\mathcal R}^{\rm op}}/{\mathcal S}_{{\mathcal D}} $ is naturally equivalent to $ {\mathbb L}^{\rm eq+}({\mathcal D})$.
\end{theorem}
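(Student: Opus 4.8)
The plan is to produce a canonical exact functor from the Serre quotient to $\mathbb{L}^{\mathrm{eq+}}(\mathcal{D})$, identify its kernel, and then show it is full and faithful; essential surjectivity will be automatic, since on objects all the categories in play share the same pp-pairs. First I would record that for each module $M$ the evaluation $\phi/\psi \mapsto \phi(M)/\psi(M)$ is an exact functor $\mathrm{ev}_M:\mathbb{L}^{\mathrm{eq+}}_{\mathcal{R}^{\mathrm{op}}} \to \mathbf{Ab}$; this is just the functor $\widetilde{M}$ of Theorem \ref{freeabcat} read through the identification of Theorem \ref{equiv3}. A pp-pair lies in $\mathcal{S}_{\mathcal{D}}$ exactly when it is killed by $\mathrm{ev}_D$ for all $D\in\mathcal{D}$. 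Taking $N$ to be an elementary cogenerator of $\mathcal{D}$, and using that pp-subgroups commute with direct products and are reflected by pure embeddings, closure on $N$ already forces closure on all of $\mathcal{D}$; hence $\mathcal{S}_{\mathcal{D}}=\ker(\mathrm{ev}_N)$ is the kernel of an exact functor, so is a Serre subcategory and the quotient is defined.

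Next I would consider the identity-on-objects assignment that relativises each pp-defined map to its class on modules in $\mathcal{D}$. This gives a functor $Q:\mathbb{L}^{\mathrm{eq+}}_{\mathcal{R}^{\mathrm{op}}} \to \mathbb{L}^{\mathrm{eq+}}(\mathcal{D})$: equivalence of formulas on all modules refines equivalence on $\mathcal{D}$, and the three conditions making $\rho$ a map on every module a fortiori make it total and functional on $\mathcal{D}$. Since exactness in either pp-pair category is detected by evaluation on the relevant modules, and $Q$ commutes with $\mathrm{ev}_D$ for $D\in\mathcal{D}$, the functor $Q$ is exact, and $Q(\phi/\psi)=0$ precisely when $\phi(D)/\psi(D)=0$ for all $D$, i.e.\ $\ker Q=\mathcal{S}_{\mathcal{D}}$. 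By the universal property of the Serre quotient recalled in Section \ref{secfreeab}, $Q$ factors as $Q=\bar{Q}\circ L$, with $L$ the localisation functor and $\bar{Q}:\mathbb{L}^{\mathrm{eq+}}_{\mathcal{R}^{\mathrm{op}}}/\mathcal{S}_{\mathcal{D}}\to\mathbb{L}^{\mathrm{eq+}}(\mathcal{D})$ exact and identity on objects. For faithfulness, a morphism of the quotient is represented by some $g:A'\to B/B'$ with $A/A',B'\in\mathcal{S}_{\mathcal{D}}$; as these are zero on $\mathcal{D}$, the image of this morphism under $\bar{Q}$ is, at each $D$, just $g(D):A(D)\to B(D)$, so if it vanishes on every $D$ then $\mathrm{im}(g)$ is closed on $\mathcal{D}$, hence lies in $\mathcal{S}_{\mathcal{D}}$, and $g$ already represents the zero morphism of the quotient.

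The crux is fullness. Given an arrow of $\mathbb{L}^{\mathrm{eq+}}(\mathcal{D})$ from $X=\phi/\psi$ to $Y=\phi'/\psi'$ represented by a pp formula $\rho(\bar{x},\bar{y})$, I would form the finitely presented functor $F_{\rho\wedge\phi(\bar{x})\wedge\phi'(\bar{y})}$, viewed as a subobject of $F_\phi\oplus F_{\phi'}$, and let $R$ be its image in $X\oplus Y=(F_\phi/F_\psi)\oplus(F_{\phi'}/F_{\psi'})$, with projections $\pi_1:R\to X$ and $\pi_2:R\to Y$. Evaluated at any $D\in\mathcal{D}$, the group $R(D)$ is exactly the graph of the function that $\rho$ defines there, so the totality and functionality of $\rho$ on $\mathcal{D}$ say precisely that $\pi_1(D)$ is an isomorphism for every $D$; equivalently $\ker\pi_1,\ \mathrm{coker}\,\pi_1\in\mathcal{S}_{\mathcal{D}}$. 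Therefore $L(\pi_1)$ is invertible in the quotient, and $L(\pi_2)\circ L(\pi_1)^{-1}$ is an arrow of $\mathbb{L}^{\mathrm{eq+}}_{\mathcal{R}^{\mathrm{op}}}/\mathcal{S}_{\mathcal{D}}$ whose image under $\bar{Q}$ is, at each $D$, the composite $\pi_2(D)\pi_1(D)^{-1}$, namely the given map.

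I expect this fullness step to be the main obstacle. The delicate points are manufacturing the graph subobject $R$ inside the ambient abelian category so that it is genuinely finitely presented and its evaluations on $\mathcal{D}$ recover the graph of $\rho$, and then reading off that ``total and functional on $\mathcal{D}$'' is exactly ``first projection inverted by the Serre localisation.'' Both rest on the dictionary between pp formulas and finitely presented functors together with the exactness of evaluation, which are already available; the remaining verifications (functoriality of $Q$, compatibility of the three map-defining conditions with passage to $\mathcal{D}$, and well-definedness of the composite in the quotient) are routine.
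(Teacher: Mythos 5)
The paper does not actually prove this result---it is quoted from \cite[12.3.20]{PreNBK}---but your argument is correct and is essentially the standard proof given there: the identity-on-objects relativisation functor is exact with kernel exactly the pp-pairs closed on ${\mathcal D}$, it factors through the Serre quotient by the universal property, and fullness is obtained by realising a $\rho$ that is total and functional on ${\mathcal D}$ as the span $X \xleftarrow{\pi_1} R \xrightarrow{\pi_2} Y$ on its graph subquotient, with $\pi_1$ inverted by the localisation precisely because its kernel and cokernel are closed on ${\mathcal D}$. All the points you flag as delicate (finite presentation of the graph functor, exactness of evaluation, detection of exactness and of zero objects by evaluation on ${\mathcal D}$) are covered by the pp-formula/finitely-presented-functor dictionary the paper already assembles in Sections \ref{secfreeabpp} and \ref{defint}, so no genuine gap remains.
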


The naturality of the equivalence is with respect to the actions on $ {\mathcal D}$: if $ \phi/\psi $ is a pp-pair for ${\mathcal R}$-modules then the action of its image in the quotient category is given by the same pp-pair but now restricted to ${\mathcal D}$.

Every small abelian category arises thus, as the category of pp-pairs for some definable category, see \cite[2.18]{PreADC}.

We give an example of computing such a localisation.

\begin{example} \label{A2langloc}\marginpar{A2langloc}
Take ${\cal D}$ to be the definable subcategory generated by the indecomposable representation $K\xrightarrow{1}K$ of $KA_2$.  The kernel of the localisation from ${\rm fun}(KA_2\mbox{-}{\rm Mod}) ={\mathbb L}_{(KA_2)^{\rm op}}^{\rm eq+}$ to ${\rm fun}(\langle K\xrightarrow{1}K\rangle)$ is the Serre subcategory generated by those indecomposables which are $0$ on this representation, that is (continuing the notation from Example \ref{A2lang}), by $Q_1$ and $T_3$.  Therefore, in the quotient category, the epimorphism from $Q_2$ to $T_2$ becomes an isomorphism (it had kernel $Q_1$ which is now zero); also the embedding of $T_2$ in $Q_3$ becomes an isomorphism (it had cokernel $T_3$).  So we see that there is just one indecomposable object of ${\rm fun}( \langle K\xrightarrow{1}K\rangle)$, as we should expect since, note, $\langle K\xrightarrow{1}K\rangle$ is equivalent to the category $K\mbox{-}{\rm Mod}$ of $K$-vector spaces and that, one may check (or see \cite[10.2.38]{PreNBK}), has $K\mbox{-}{\rm mod}$ for its functor category.
\end{example}

\subsection{Imaginaries} \label{secimag}\marginpar{secimag}

If $M$ is an ${\cal R}$-module, we may view the corresponding exact functor $\widehat{M}: {\cal A}(M) \rightarrow {\bf Ab}$ as an enriched version of $M$.  It has been enriched by the addition of many new sorts, namely the various $\phi(M)/\psi(M)$ for $\phi/\psi \in {\mathbb L}^{\rm eq+}_{{\mathcal R}^{\rm op}}$ (or in the Serre quotient ${\cal A}(M)$ of that category), plus the pp-definable functions between these sorts.  The model-theoretic view is that these are new kinds of elements of $M$, ``imaginaries", see Section \ref{secmodth}.  They ``are essentially" elements of $M$ in the same sense that one might say this of $n$-tuples of elements of $M$.  This has proved to be an extremely useful point of view in model theory in general as well as in particular contexts.  In the context of modules/additive structures it fits particularly well with the functor-category approach to representation theory.

For example, from this point of view, if $A$ is a finitely presented module, then the elements of $(A,M)$ are essentially elements of $M$.  Also, if $A$ is ${\rm FP}_2$, then the elements of ${\rm Ext}^1(A,M)$ are essentially elements of $M$ (see \ref{tilting}).  From this perspective, in the example in Section \ref{secKT}, the elements of the $K\widetilde{A_1}$-representation build from a $K[T]$-module $M$ are essentially elements of $M$, that $K\widetilde{A_1}$-representation being an imaginary sort of $M$.  Indeed, its underlying group has the form $\phi(M)/\psi(M)$ for some pp-pair $\phi/\psi$ in the language of $K[T]$-modules, and the action of each element of $K\widetilde{A_1}$ on that group is defined by a pp formula.

Put another way, denoting by $M^{\rm eq+}$ the multi-sorted structure built in this way from $M$, this is just a slightly different way of viewing the functor $\widehat{M}: {\cal A}(M) \rightarrow {\bf Ab}$, which is evaluation of sorts at $M$.  Namely, it is the image of that functor.

\subsection{Interpretation functors} \label{secinterp}\marginpar{secinterp}

A functor $F: {\cal C} \rightarrow {\cal D}$ between definable categories is an {\bf interpretation functor} if it commutes with direct products and direct limits (it then follows that $F$ preserves pure-exact sequences and pure-injectivity).  So these are the structure-preserving functors between definable categories.  They are also the functors which are model-theoretic interpretations given by pp formulas - see Section \ref{secmodth}.  They precisely correspond, contravariantly, to the exact functors between the corresponding abelian functor categories - see Section \ref{sec2cats}.  The functors we saw in Section \ref{secKT} are easily seen to be interpretation functors.

\begin{example}  Given a morphism $\theta: R\rightarrow S$ of rings, this induces a restriction-of-scalars functor from ${\rm Mod}\mbox{-}S$ to ${\rm Mod}\mbox{-}R$, and that is clearly an interpretation functor.
\end{example}

\begin{example} \label{repemb}\marginpar{repemb}
If $_SB_R$ is an $(S,R)$-bimodule which, as a left $S$-module is finitely presented, then the functor $-\otimes_SB_R: S\mbox{-}{\rm Mod} \rightarrow R\mbox{-}{\rm Mod}$ is an interpretation functor:  since tensor is a left adjoint, it preserves direct limits, and tensoring with a finitely presented module preserves products \cite{Len}.

In particular, if $R$ and $S$ are finite-dimensional algebras then we say that $F=( _SB_R\otimes _S-): S\mbox{-}{\rm Mod} \rightarrow  R\mbox{-}{\rm Mod}$ is a {\bf representation embedding} if $ _SB $ is finitely generated projective and the restriction of $F$ to $S\mbox{-}{\rm mod}$ preserves indecomposability and reflects isomorphism (but we are not requiring that it must be full on isomorphisms).  (The concept is not restricted to finite-dimensional algebras.  For instance, if $S$ is $K[T]$ or $K\langle X,Y\rangle$ then the condition is usually required on finite-dimensional, rather than finitely presented, $S$-modules.)

What is not clear is whether a representation embedding $F$ is an equivalence, in the category of definable categories and interpretation functors, between its domain and its image.  That is, can the original category $S\mbox{-}{\rm Mod}$ be ``recovered definably" from its image?  It is an open question whether this is always the case, though it is true if we have a strict representation embedding \cite{PreWild} or, more generally, if $F$ is a finitely-controlled representation embedding \cite[4.3]{GrePre}.  From this one can deduce that the conjecture ``wild representation type implies undecidable theory of modules" is true for finitely-controlled-wild (including strictly wild) algebras.
\end{example}

I explain, in Section \ref{secmodth}, the sense in which interpretation functors are indeed ``interpretations".

\section{When is a module over a ring?} \label{secmodrng}\marginpar{secmodrng}

We say that a module $M$ is {\bf over a ring} if $M$ can be represented as a functor from a single-object preadditive category to ${\bf Ab}$, equivalently if there is a ring $R$ with 1 such that ${\cal A}(M)$ is a quotient of ${\rm Ab}(R)$ by a Serre subcategory.

\begin{example}\label{Ainfex0}\marginpar{Ainfex0} Let ${\mathcal R}$ be the $K$-linear path category of the quiver $A_\infty^\infty$:  \\ $\dots \rightarrow -1 \rightarrow 0 \rightarrow 1 \rightarrow 2 \rightarrow \dots$.  So, $(i,j) =K$ if $i\leq j$ and $=0$ otherwise.  The category of ${\mathcal R}$-modules is not equivalent to the category of modules over any ring with $1$ so there will be many ${\cal R}$-modules which are not over any ring.\footnote{This can be deduced, for example, from the fact that the Ziegler spectrum of this category is not compact:  the open sets determined by the pp-pairs $(e_ix=x)/(x=0)$ give a cover with no finite subcover but, if $R$ is a ring with 1, then its Ziegler spectrum is compact and so, therefore, is  the Ziegler spectrum of any definable subcategory, since it is a closed subset.}
\end{example}

We will say that an object $A$ in an abelian category ${\cal A}$ is an {\bf abelian generator} of ${\cal A}$ if ${\cal A}$ is the smallest, possibly non-full, abelian subcategory of ${\cal A}$ containing $A$ and all its endomorphisms.

\begin{prop}\label{modoverrng}\marginpar{modoverrng} Let $M$ be a module; then $M$ is over a ring iff there is an object $U$ of ${\cal A}(M)$ which is an abelian generator of ${\cal A}(M)$.
\end{prop}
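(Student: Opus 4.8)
The plan is to work throughout with the presentation ${\cal A}(M) = {\rm Ab}({\cal R})/{\cal S}_M$ of Section \ref{secassocab}, the universal property of the free abelian category (Theorem \ref{freeabcat}, in its general form for an arbitrary abelian target, \cite{FreydLJ}), and one preliminary observation: for a one-object ring $R$, the image object $j_R(\ast_R)$ is an abelian generator of ${\rm Ab}(R)$. This follows from the realization ${\rm Ab}(R) = (R\mbox{-}{\rm mod}, {\bf Ab})^{\rm fp}$ of \cite{Adel}: every finitely presented functor is a cokernel of a map between representables $(A,-)$, every such $A$ is a cokernel of a map between finite powers of $_RR$, and $_RR$ corresponds to $j_R(\ast_R)$; so every object and morphism of ${\rm Ab}(R)$ is built from $j_R(\ast_R)$ and ${\rm End}(j_R(\ast_R)) = R$ by finite direct sums, kernels and cokernels. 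Equivalently, the smallest abelian subcategory of ${\rm Ab}(R)$ containing $j_R(\ast_R)$ and its endomorphisms is all of ${\rm Ab}(R)$.

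For the direction from abelian generator to ring, suppose $U$ is an abelian generator of ${\cal A}(M)$ and put $R = {\rm End}_{{\cal A}(M)}(U)$, a ring with $1$. The additive functor $R \to {\cal A}(M)$ sending $\ast_R$ to $U$ and each $r$ to itself extends, by the universal property, to an exact functor $E: {\rm Ab}(R) \to {\cal A}(M)$ with $E(j_R(\ast_R)) = U$ and $E$ the identity on $R = {\rm End}(j_R(\ast_R))$. Set $N = \widehat{M}(U)$, the $R$-module obtained by composing $\ast_R \mapsto U$ with $\widehat{M}: {\cal A}(M) \to {\bf Ab}$. Since $\widehat{M}E$ and $\widetilde{N}$ both extend $N: R \to {\bf Ab}$, uniqueness gives $\widehat{M}E \cong \widetilde{N}$, and because $\widehat{M}$ is faithful and exact it reflects the zero object, so $\ker E = \ker(\widehat{M}E) = \ker \widetilde{N} = {\cal S}_N$. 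Hence $E$ factors through an exact, faithful functor $\overline{E}: {\cal A}(N) = {\rm Ab}(R)/{\cal S}_N \to {\cal A}(M)$. It remains to see that $\overline{E}$ is an equivalence. The objects $\overline{E}(Z)$ and morphisms $\overline{E}(h)$ form a subcategory of ${\cal A}(M)$ which, as $\overline{E}$ is faithful and exact (so $\overline{E}(\ker h) = \ker \overline{E}(h)$, and likewise for cokernels and sums), is an abelian subcategory; it contains $U$ and, since $\overline{E}$ restricts to the identity on $R = {\rm End}(U)$, all of ${\rm End}_{{\cal A}(M)}(U)$. By minimality of the abelian generator $U$ this subcategory is all of ${\cal A}(M)$, so $\overline{E}$ is full and essentially surjective, hence an equivalence. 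Therefore ${\cal A}(M) \simeq {\rm Ab}(R)/{\cal S}_N$ and $M$ is over the ring $R$.

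Conversely, if $M$ is over a ring $R$ then ${\cal A}(M) \simeq {\rm Ab}(R)/{\cal S}$ with quotient functor $q$; put $U = q(j_R(\ast_R))$. Let ${\cal H}$ be the smallest abelian subcategory of ${\cal A}(M)$ containing $U$ and ${\rm End}_{{\cal A}(M)}(U)$; I must show ${\cal H} = {\cal A}(M)$. Using that $j_R(\ast_R)$ generates ${\rm Ab}(R)$, that $q$ is exact, and that $q(R) \subseteq {\rm End}_{{\cal A}(M)}(U) \subseteq {\cal H}$, a structural induction over the construction of objects and morphisms of ${\rm Ab}(R)$ shows $q(X) \in {\cal H}$ and $q(f) \in {\cal H}$ for every object $X$ and morphism $f$; in particular every object of ${\cal A}(M)$, being some $q(X)$ since $q$ is essentially surjective, lies in ${\cal H}$. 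For morphisms, recall that an arbitrary $g: q(X) \to q(Y)$ in the Serre quotient is a roof $g = q(b)\,q(a)^{-1}$ with $a, b$ morphisms of ${\rm Ab}(R)$ and $q(a)$ an isomorphism; then $q(a), q(b) \in {\cal H}$, and since the inclusion ${\cal H} \hookrightarrow {\cal A}(M)$ is exact and faithful it reflects isomorphisms, so $q(a)$ is already invertible in ${\cal H}$, whence $g \in {\cal H}$. Thus ${\cal H}$ contains every object and every morphism of ${\cal A}(M)$, so $U$ is an abelian generator.

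The main obstacle is the same phenomenon in both directions: Serre localization introduces morphisms — in particular extra endomorphisms of $U$ — that are not images of morphisms of ${\rm Ab}(R)$, so the naive assertion that the image of a generator generates the quotient is not immediate. In the direction producing a ring I sidestep this by working with the faithful exact $\overline{E}$ and noting that its image is automatically an abelian subcategory already containing all of ${\rm End}(U)$; in the direction producing a generator I handle it through the calculus-of-fractions (roof) description of quotient morphisms together with the fact that an abelian subcategory inverts any ambient isomorphism lying in it. A minor point to flag is that I use the universal property of Freyd's free abelian category for a general abelian target rather than only for ${\bf Ab}$ as stated in Theorem \ref{freeabcat}; this is the form actually established in \cite{FreydLJ}.
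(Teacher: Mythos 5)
Your proof is correct and follows the same overall strategy as the paper's: establish that $(_RR,-)$ is an abelian generator of ${\rm Ab}(R)$, push that generator through the Serre quotient for one direction, and use the universal property of ${\rm Ab}(R)$ applied to $R={\rm End}(U)$ for the other. Two of your steps go usefully beyond what the paper records. First, for ``over a ring $\Rightarrow$ generator'', the paper disposes of the localisation issue with the parenthetical that inverse images of abelian subcategories under exact functors are abelian; that shows every $q(f)$ lies in the candidate subcategory ${\cal H}$, but does not by itself account for the morphisms of the quotient that are not of the form $q(f)$. Your roof decomposition $g=q(b)\,q(a)^{-1}$, combined with the observation that an exact faithful inclusion reflects isomorphisms (so ${\cal H}$ already contains $q(a)^{-1}$), closes exactly that point. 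Second, in the direction ``generator $\Rightarrow$ ring'', the paper stops once the image of the exact extension is seen to be all of ${\cal A}(M)$, whereas your identification $\ker E={\cal S}_N$ for $N=\widehat{M}(U)$ --- via the uniqueness clause of Theorem \ref{freeabcat} and the fact that $\widehat{M}$ reflects zero --- additionally produces a concrete $R$-module $N$ with ${\cal A}(M)\simeq {\cal A}(N)$, realising $M$ as ``the same module as'' $N$ in the sense of Section \ref{secassocab}. The one place where you are thinner than the paper is the preliminary claim that all \emph{morphisms} of ${\rm Ab}(R)$, in particular the maps $(f,-)$ between representables, lie in the abelian subcategory generated by $(R,-)$: the paper's lifting argument using the injectivity of $(R,-)$ and the epi-mono factorisation is what makes ``every object and morphism is built from $(R,-)$'' precise, and you should reproduce or cite that step rather than assert it.
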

\begin{proof}  First we recall why $(_RR,-)$ is an abelian generator of ${\rm Ab}(R)$.  Let $A\in R\mbox{-}{\rm mod}$ and choose a projective presentation $R^m \xrightarrow{h} R^n \xrightarrow{p} A \rightarrow 0$.  This induces an exact sequence $0 \rightarrow (A,-) \xrightarrow{(p,-)} (R,-)^n \xrightarrow{(h,-)} (R,-)^m$ so, as the kernel of $(h,-)$, the functor $(A,-)$ and its inclusion into $(R,-)^n$ is in the abelian subcategory generated by $(R,-)$.  Any morphism $(B,-) \rightarrow (A,-)$ has the form $(f,-)$ for some $f:A\rightarrow B$ in $R\mbox{-}{\rm mod}$.  Taking also a projective presentation of $B$, we can see, using that $(R,-)$ is injective (as well as projective), that $(f,-)$ is the restriction of some morphism between powers of $(R,-)$.
$\xymatrix{
0 \ar[r] & (A,-) \ar[r]^{(p,-)} & (R,-)^n  \ar[r]^{(h,-)} & (R,-)^m \\
0 \ar[r] & (B,-) \ar[r]^{(q,-)} \ar[u]^{(f,-)} & (R,-)^k  \ar[r] \ar[u]^{(f',-)} & (R,-)^l
}$

Consider the epi-mono factorisation of $(f,-)$; we claim that each map is in the abelian subcategory generated by $(R,-)$.  The epimorphism is there because it is the cokernel of the kernel of $(p,-)(f,-) = (f',-)(q,-)$ and the latter is a morphism in the abelian category generated by $(R,-)$.  It then follows from the cokernel property that the monomorphism also is in the subcategory.  So the composition, $(f,-)$ of these maps is in the subcategory, as required.  Since every object of ${\rm Ab}(R)$ is the cokernel of a map such as $(f,-)$, we have all the objects of ${\rm Ab}(R)$ in the abelian subcategory generated by $(R,-)$.  Finally, given a morphism $F_g \rightarrow F_f$ in ${\rm Ab}(R)$ we can lift it to a projective presentation and deduce from what we have already, that this morphism also is in the subcategory generated by ${\rm Ab}(R)$.

[The argument is equally valid in the case where general ${\cal R}$ replaces $R$, the conclusion being that ${\rm Ab}({\cal R})$ is the smallest abelian subcategory containing all the (projective-injective) functors $(P,-)$ where $P$ ranges over any generating set of the category ${\cal R}\mbox{-}{\rm proj}$ of finitely generated projective modules.)]

If ${\cal A}(M)$ is a Serre quotient of a category of the form ${\rm Ab}(R)$ then, since the forgetful functor $(_RR,-)$ is an abelian generator of ${\rm Ab}(R)$, the same is true of its image in ${\cal A}(M)$ (since the inverse image under an exact functor of an abelian subcategory is abelian).

For the converse, suppose that ${\cal A}(M)$ has an abelian generator $F$; set $R={\rm End}(F)$.  Let $M_F$ denote the functor from the preadditive category $R$ to ${\cal A}(M)$ with image the 1-object full subcategory of ${\cal A}(M)$ on $F$.  Then this extends to an exact functor from ${\rm Ab}(R)$ to ${\cal A}(M)$, the image of which is an abelian subcategory which contains $F$, hence which is all of ${\cal A}(M)$.
\end{proof}

\section{Elementary duality} \label{secdual}\marginpar{secdual}

There is a duality which applies to everything that we have discussed so far.  To every definable category ${\cal D}$ there is a corresponding dual definable category ${\cal D}^{\rm d}$, defined below, which, in the case of a category ${\cal R}\mbox{-}{\rm Mod}$ of left modules, is the category ${\rm Mod}\mbox{-}{\cal R}$ of right modules.  The associated small abelian functor categories, ${\rm fun}({\cal D})$ and ${\rm fun}({\cal D}^{\rm d})$, are opposite and there is a natural bijection between definable subcategories of ${\cal D}$ and its dual (this follows most directly from the fact that a Serre subcategory of an abelian category is also a Serre subcategory of the opposite category).  The pervasiveness of duality throughout all associated structures is clear from certain equivalences of 2-categories, one of which we discuss in the next section.  The existence of this duality is clear on the 2-category, $ {\mathbb A}{\mathbb B}{\mathbb E}{\mathbb X}$, of small abelian categories with exact functors, which has a natural involution taking each abelian category $ {\mathcal A}$ to its opposite $ {\mathcal A}^{\rm op}$.

If $ {\mathcal D}={\rm Ex}({\mathcal A},{\bf Ab}) $ then the ({\bf elementary}) {\bf dual} category of $ {\mathcal D} $ is $ {\mathcal D}^{\rm d}={\rm Ex}({\mathcal A}^{\rm op},{\bf Ab}) $ and we have $ {\mathbb L}^{\rm eq+}({\mathcal D}^{\rm d})=\big({\mathbb L}^{\rm eq+}({\mathcal D})\big)^{\rm op}$, that is $ {\rm fun}({\mathcal D}^{\rm d})=({\rm fun}({\mathcal D}))^{\rm op}$. In particular, $(R\mbox{-}{\rm mod}, {\bf Ab})^{\rm fp} \simeq \big( ({\rm mod}\mbox{-}R, {\bf Ab})^{\rm fp}\big) ^{\rm op}$ (\cite[\S 7]{AusDual}, \cite[5.6]{GrJeDim}).  In the context of the model theory of modules this duality was found first for pp formulas, and termed elementary duality, in \cite{PreDual}, then extended to the category of pp-pairs (and thence to theories of modules) in \cite{HerzDual}.

\section{The bigger picture}\label{sec2cats}\marginpar{sec2cats}

What we have described above fits into a framework given by certain equivalences and anti-equivalences of 2-categories.  The 2-categories involved are categories of categories; they have categories for their objects, functors for their arrows and the ``2" refers simply to the extra structure of natural transformations between functors.  One of these 2-categories is ${\mathbb A}{\mathbb B}{\mathbb E}{\mathbb X}$:  the objects of this category are the skeletally small abelian categories and the arrows are the exact functors between these; the natural transformations between these exact functors are the 2-arrows.  Another is  ${\mathbb D}{\mathbb E}{\mathbb F}$:  its objects are the definable additive categories, its arrows are the interpretation functors and, again, natural transformations provide the ``2"-structure.  There is a contravariant equivalence between these.

\begin{theorem}\label{2cats} \cite[2.3 and comments following that]{PreRajShv} There is an anti-equivalence between $ {\mathbb A}{\mathbb B}{\mathbb E}{\mathbb X}$ and $ {\mathbb D}{\mathbb E}{\mathbb F} $:

\begin{center} $\xymatrix{{\mathbb A}{\mathbb B}{\mathbb E}{\mathbb X} \ar@{-}[rr]^{\simeq^{\rm op}} && \ar@{-}[ll]_{\simeq^{\rm op}}  {\mathbb D}{\mathbb E}{\mathbb F} }$.
\end{center}

\noindent Explicitly:

\begin{center} $\xymatrix{{\mathcal A} ={\rm fun}({\mathcal D}) =({\cal D}, {\bf Ab})^{\prod \rightarrow}  \ar@/^/[rr] & & {\mathcal D} = {\rm Ex}({\mathcal A}, {\bf Ab})   \ar@/^/[ll] }$

\noindent with the actions on morphisms being given by composition.
\end{center}

\end{theorem}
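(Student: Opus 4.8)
The plan is to realise the anti-equivalence as a pair of mutually quasi-inverse contravariant pseudofunctors $\Phi={\rm Ex}(-,{\bf Ab}):{\mathbb A}{\mathbb B}{\mathbb E}{\mathbb X}\rightarrow{\mathbb D}{\mathbb E}{\mathbb F}$ and $\Psi={\rm fun}(-):{\mathbb D}{\mathbb E}{\mathbb F}\rightarrow{\mathbb A}{\mathbb B}{\mathbb E}{\mathbb X}$, each acting on $1$-cells and $2$-cells by precomposition (whiskering). On objects these are the two assignments in the statement; the content is that $\Psi\Phi$ and $\Phi\Psi$ are pseudonaturally equivalent to the respective identity pseudofunctors, witnessed by evaluation. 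So the proof splits into (a) checking that both assignments are genuine contravariant $2$-functors, and (b) producing the two biduality equivalences ${\cal A}\simeq{\rm fun}({\rm Ex}({\cal A},{\bf Ab}))$ and ${\cal D}\simeq{\rm Ex}({\rm fun}({\cal D}),{\bf Ab})$ and verifying that they are pseudonatural.

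For well-definedness, ${\rm Ex}({\cal A},{\bf Ab})$ is a definable category by Theorem \ref{defequiv}, while ${\rm fun}({\cal D})=({\cal D},{\bf Ab})^{\prod\rightarrow}$ is skeletally small abelian, as recorded before Theorem \ref{equiv3}. Limits and colimits in categories of exact functors into ${\bf Ab}$, and in these ${\rm fun}$-categories, are computed pointwise (products and filtered colimits being exact in ${\bf Ab}$). Hence, given an exact $F:{\cal A}\rightarrow{\cal B}$, precomposition $(-)\circ F$ sends an exact $G:{\cal B}\rightarrow{\bf Ab}$ to the exact $GF$ and commutes with products and direct limits, so it is an interpretation functor $\Phi{\cal B}\rightarrow\Phi{\cal A}$; and given an interpretation functor $I:{\cal C}\rightarrow{\cal D}$, precomposition $(-)\circ I$ carries $({\cal D},{\bf Ab})^{\prod\rightarrow}$ into $({\cal C},{\bf Ab})^{\prod\rightarrow}$ (since $I$ preserves products and direct limits) and is exact (pointwise). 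On $2$-cells both act by horizontal composition, visibly respecting vertical and horizontal composition, so $\Phi$ and $\Psi$ are pseudofunctors.

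The heart is the biduality. For the unit $\eta_{\cal A}:{\cal A}\rightarrow{\rm fun}({\rm Ex}({\cal A},{\bf Ab}))$, $A\mapsto{\rm ev}_A$ (where ${\rm ev}_A(G)=G(A)$, which commutes with products and direct limits), present ${\cal A}\simeq{\rm Ab}({\cal R})/{\cal S}$ for a skeletally small preadditive ${\cal R}$ and a Serre subcategory ${\cal S}$ (the fact cited from \cite[2.18]{PreADC}). By the universal property of the free abelian category (Theorem \ref{freeabcat}), ${\rm Ex}({\rm Ab}({\cal R}),{\bf Ab})\simeq{\cal R}\mbox{-}{\rm Mod}$; under this identification ${\rm Ex}({\cal A},{\bf Ab})$ becomes the full subcategory of modules killing ${\cal S}$, which is a definable subcategory ${\cal D}_{\cal S}$ by Theorem \ref{chardefsub}(iii). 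Theorems \ref{equiv3} and \ref{ppsortquotpp} then give ${\rm fun}({\cal D}_{\cal S})\simeq{\mathbb L}^{\rm eq+}_{{\cal R}^{\rm op}}/{\cal S}\simeq{\rm Ab}({\cal R})/{\cal S}\simeq{\cal A}$, using that the Serre subcategory of pp-pairs closed on ${\cal D}_{\cal S}$ is again ${\cal S}$ (the Galois correspondence between Serre subcategories of ${\rm Ab}({\cal R})$ and definable subcategories of ${\cal R}\mbox{-}{\rm Mod}$); one then checks this composite is $\eta_{\cal A}$. For the counit $\epsilon_{\cal D}:{\cal D}\rightarrow{\rm Ex}({\rm fun}({\cal D}),{\bf Ab})$, $D\mapsto{\rm ev}_D$ (exact in the functor variable, again by pointwise exactness), I would run the same chain with ${\cal D}$ presented as a definable subcategory of some ${\cal R}\mbox{-}{\rm Mod}$ via Theorem \ref{defequiv}: writing ${\rm fun}({\cal D})\simeq{\rm Ab}({\cal R})/{\cal S}$, exact functors on it correspond to modules killing ${\cal S}$, i.e.~to objects of ${\cal D}$, and one checks this equivalence is evaluation $D\mapsto{\rm ev}_D$.

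I expect the counit to be the main obstacle. Faithfulness and fullness of $D\mapsto{\rm ev}_D$ are relatively soft — the sorts and pp-definable maps appearing in ${\rm fun}({\cal D})$ separate the objects and morphisms of ${\cal D}$, and a natural transformation ${\rm ev}_D\Rightarrow{\rm ev}_{D'}$, respecting all pp-definable structure, is forced to come from an honest morphism of ${\cal D}$. The genuinely hard point is essential surjectivity: that every exact functor ${\rm fun}({\cal D})\rightarrow{\bf Ab}$ is evaluation at some object of ${\cal D}$, with none omitted. This is exactly where ${\rm fun}({\cal D})$ must be known to be a complete invariant of ${\cal D}$, so that ${\cal D}$ is recovered as the ``models'' of its own functor category; I would secure it using that ${\cal D}$ has an elementary cogenerator (indeed ${\cal D}=\langle M\rangle$ for a pure-injective $M$) and that the indecomposable pure-injectives, the points of the Ziegler spectrum, pin down an arbitrary exact functor on ${\rm fun}({\cal D})$. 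Everything remaining — pseudonaturality of $\eta$ and $\epsilon$ (a Yoneda-style comparison of ${\rm ev}_{FA}$ with ${\rm ev}_A\circ\Phi F$) and the triangle identities up to coherent isomorphism — is then formal, flowing from the same pointwise computation of limits and colimits that underlies well-definedness.
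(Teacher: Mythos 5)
The paper does not actually prove this theorem: it is quoted from \cite[2.3]{PreRajShv}, so there is no in-text argument to compare against. Your proposal is, however, a correct reconstruction of the standard argument, and it is the one the cited source and the surrounding machinery of this paper are built on: present ${\cal A}$ as ${\rm Ab}({\cal R})/{\cal S}$, use the universal property (Theorem \ref{freeabcat}) to identify ${\rm Ex}({\rm Ab}({\cal R}),{\bf Ab})$ with ${\cal R}\mbox{-}{\rm Mod}$ and hence ${\rm Ex}({\cal A},{\bf Ab})$ with the definable subcategory ${\cal D}_{\cal S}$, and then close the loop with Theorems \ref{equiv3} and \ref{ppsortquotpp} together with the bijective correspondence between Serre subcategories of ${\rm Ab}({\cal R})$ and definable subcategories of ${\cal R}\mbox{-}{\rm Mod}$. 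The pointwise computation of (co)kernels, products and direct limits in the relevant functor categories does all the $2$-categorical bookkeeping, exactly as you say.

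One economy worth noting: you single out essential surjectivity of the counit as the hard step and propose to attack it via elementary cogenerators and the Ziegler spectrum, but the Galois correspondence you already invoke for the unit disposes of it directly. An exact functor on ${\rm fun}({\cal D})\simeq{\rm Ab}({\cal R})/{\cal S}_{\cal D}$ pulls back along the (exact) localisation to an exact functor on ${\rm Ab}({\cal R})$ killing ${\cal S}_{\cal D}$, i.e.\ to a module $M$ with $\widetilde{M}{\cal S}_{\cal D}=0$; and the statement that the class of such $M$ is exactly ${\cal D}$ is precisely the identity ${\cal D}_{{\cal S}_{\cal D}}={\cal D}$ for definable ${\cal D}$, which is Theorem \ref{chardefsub}(iii) applied to $\Phi={\cal S}_{\cal D}$. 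So no appeal to pure-injectives or the spectrum is needed; the same bidirectional correspondence that gives ${\cal S}={\cal S}_{{\cal D}_{\cal S}}$ for the unit gives ${\cal D}={\cal D}_{{\cal S}_{\cal D}}$ for the counit, and what remains is only the (routine) check that the resulting equivalences are the evaluation maps and are pseudonatural.
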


In fact, this fits into a larger picture of 2-categories, see the introduction of \cite{PreADC}

The following result is one instance of this picture.  Here ${\cal A}({\cal R})$ is the smallest abelian subcategory of ${\rm Mod}\mbox{-}{\cal R}$ which contains the finitely presented ${\cal R}$-modules, see Example \ref{flatabsex}, and ${\mathcal R}\mbox{-}{\rm Flat}$ denotes the category of flat left ${\cal R}$-modules.

\begin{prop} \label{cohexgen}\marginpar{cohexgen} \cite[7.1]{PreAxtFlat} If $ {\mathcal R} $ is any skeletally small preadditive category then $${\rm Ex}({\mathcal A}({\mathcal R}),{\bf Ab})\simeq \langle {\mathcal R}\mbox{-}{\rm Flat}\rangle .$$ If $ {\mathcal R} $ is right coherent, so $  {\mathcal A}({\mathcal R})={\rm mod}\mbox{-}{\mathcal R} $ and $ {\mathcal R}\mbox{-}{\rm Flat} $ is a definable subcategory of $ {\mathcal R}\mbox{-}{\rm Mod}$, then $${\rm Ex}({\rm mod}\mbox{-}{\mathcal R},{\bf Ab})\simeq {\mathcal R}\mbox{-}{\rm Flat}.$$
\end{prop}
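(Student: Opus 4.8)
The plan is to deduce this from the anti-equivalence ${\mathbb A}{\mathbb B}{\mathbb E}{\mathbb X}\simeq^{\rm op}{\mathbb D}{\mathbb E}{\mathbb F}$ of Theorem \ref{2cats}. That anti-equivalence sends a skeletally small abelian category $\mathcal{A}$ to the definable category ${\rm Ex}(\mathcal{A},{\bf Ab})$ and, inversely, a definable category $\mathcal{D}$ to ${\rm fun}(\mathcal{D})$, these assignments being mutually inverse on objects. Since $\langle\mathcal{R}\mbox{-}{\rm Flat}\rangle$ is by construction a definable subcategory of $\mathcal{R}\mbox{-}{\rm Mod}$, hence definable by Theorem \ref{defequiv}, it therefore suffices to prove that its functor category is the right one, namely ${\rm fun}(\langle\mathcal{R}\mbox{-}{\rm Flat}\rangle)\simeq\mathcal{A}(\mathcal{R})$; for then $\langle\mathcal{R}\mbox{-}{\rm Flat}\rangle$ and ${\rm Ex}(\mathcal{A}(\mathcal{R}),{\bf Ab})$ are the images under ${\rm Ex}(-,{\bf Ab})$ of equivalent abelian categories, so are themselves equivalent.

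To compute ${\rm fun}(\langle\mathcal{R}\mbox{-}{\rm Flat}\rangle)$ I would use the localisation description of Theorem \ref{ppsortquotpp} together with the identification ${\rm fun}(\mathcal{R}\mbox{-}{\rm Mod})={\rm Ab}(\mathcal{R})=(\mathcal{R}\mbox{-}{\rm mod},{\bf Ab})^{\rm fp}$ of Theorem \ref{equiv3}: thus ${\rm fun}(\langle\mathcal{R}\mbox{-}{\rm Flat}\rangle)$ is the Serre quotient ${\rm Ab}(\mathcal{R})/\mathcal{S}$, where $\mathcal{S}$ consists of those finitely presented functors that vanish on every flat left module. Because a finitely presented functor commutes with direct limits and every flat module is a direct limit of finitely generated projectives (Lazard--Govorov, in the form valid over a preadditive category), $\mathcal{S}$ is exactly the Serre subcategory of functors vanishing on all finitely generated projective left modules. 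The concrete matching of objects is carried by the tensor pairing: a flat module $M$ goes to the restriction of $-\otimes_{\mathcal{R}}M$ to $\mathcal{A}(\mathcal{R})$, which is exact because tensoring with a flat module is exact on ${\rm Mod}\mbox{-}\mathcal{R}$ and $\mathcal{A}(\mathcal{R})$ is an exact abelian subcategory, while (where defined) the inverse $G\mapsto\big(p\mapsto G((-,p))\big)$ reads off the values of $G$ on the representable right $\mathcal{R}$-modules.

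The technical heart, and the step I expect to be the main obstacle, is the identification of ${\rm Ab}(\mathcal{R})/\mathcal{S}$ with the abelian envelope $\mathcal{A}(\mathcal{R})$ of the finitely presented right modules inside ${\rm Mod}\mbox{-}\mathcal{R}$. Here I would bring in elementary duality (Section \ref{secdual}), under which ${\rm Ab}(\mathcal{R})\simeq\big(({\rm mod}\mbox{-}\mathcal{R},{\bf Ab})^{\rm fp}\big)^{\rm op}$, so that the functors vanishing on flats translate to a recognisable Serre subcategory on the right-module side and the quotient can be computed as the closure, under the abelian operations of ${\rm Mod}\mbox{-}\mathcal{R}$, of the representables (equivalently, of the finitely presented right modules). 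The delicate point is the non-coherent case: there $\langle\mathcal{R}\mbox{-}{\rm Flat}\rangle$ strictly contains $\mathcal{R}\mbox{-}{\rm Flat}$ (products of flats need not be flat) and $\mathcal{A}(\mathcal{R})$ strictly contains the non-abelian ${\rm mod}\mbox{-}\mathcal{R}$; moreover $-\otimes_{\mathcal{R}}M$ is exact on $\mathcal{A}(\mathcal{R})$ precisely when $M$ is flat, so the tensor functor alone does not account for the extra objects of $\langle\mathcal{R}\mbox{-}{\rm Flat}\rangle$. The equivalence is thus genuinely one of definable categories produced abstractly from the functor-category computation, the flat modules and tensor functors giving only a dense, generating part of the correspondence; keeping track of the kernels of maps between finitely presented modules (first syzygies), which lie in $\mathcal{A}(\mathcal{R})$ but need not be finitely presented, is exactly what makes the quotient come out as $\mathcal{A}(\mathcal{R})$ rather than ${\rm mod}\mbox{-}\mathcal{R}$.

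Finally, the second assertion is the specialisation to right coherent $\mathcal{R}$. Coherence makes ${\rm mod}\mbox{-}\mathcal{R}$ abelian, whence $\mathcal{A}(\mathcal{R})={\rm mod}\mbox{-}\mathcal{R}$, and it makes $\mathcal{R}\mbox{-}{\rm Flat}$ closed under products as well as under direct limits and pure submodules, hence a definable subcategory of $\mathcal{R}\mbox{-}{\rm Mod}$, so that $\langle\mathcal{R}\mbox{-}{\rm Flat}\rangle=\mathcal{R}\mbox{-}{\rm Flat}$. Substituting both equalities into the first statement yields ${\rm Ex}({\rm mod}\mbox{-}\mathcal{R},{\bf Ab})\simeq\mathcal{R}\mbox{-}{\rm Flat}$; in this case the tensor functor $M\mapsto(-\otimes_{\mathcal{R}}M)$ and its inverse $G\mapsto\big(p\mapsto G((-,p))\big)$ implement the equivalence directly, recovering the classical description of exact functors on ${\rm mod}\mbox{-}\mathcal{R}$ as the flat left modules.
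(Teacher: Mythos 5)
The paper itself does not prove this proposition --- it is quoted from \cite[7.1]{PreAxtFlat} --- so there is no internal argument to compare against line by line; judged on its own terms, your reconstruction is sound and is essentially the intended one. The reduction via Theorem \ref{2cats} to showing ${\rm fun}(\langle{\mathcal R}\mbox{-}{\rm Flat}\rangle)\simeq{\mathcal A}({\mathcal R})$ is correct, as is the identification (via Theorems \ref{equiv3} and \ref{ppsortquotpp}) of that functor category with ${\rm Ab}({\mathcal R})/{\mathcal S}$, where ${\mathcal S}$ consists of the finitely presented functors vanishing on all flat left modules; and your Lazard--Govorov argument legitimately replaces ``flat'' by ``finitely generated projective'' in describing ${\mathcal S}$, since finitely presented functors commute with direct limits. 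The one place where you make things harder than necessary is your ``technical heart'': there is no need to route through elementary duality. The category ${\mathcal A}({\mathcal R})$ is \emph{defined} (Example \ref{flatabsex}, citing \cite[6.4]{PreRajShv}) as the image, in the non-full-subcategory sense, of the exact evaluation functor $({\mathcal R}\mbox{-}{\rm mod},{\bf Ab})^{\rm fp}\rightarrow{\rm Mod}\mbox{-}{\mathcal R}$ given by $F\mapsto \big(p\mapsto F((p,-))\big)$, and the kernel of that evaluation functor is exactly your ${\mathcal S}$. Since an exact functor between abelian categories with trivial kernel is automatically faithful, and the image with hom-groups taken to be the set-theoretic images of the hom-groups realises the Gabriel quotient, the equivalence ${\rm Ab}({\mathcal R})/{\mathcal S}\simeq{\mathcal A}({\mathcal R})$ falls out directly, with no detour to the right-hand side. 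Your handling of the coherent case, and your cautionary remarks about the non-coherent case (where the tensor functors account only for the flat objects of $\langle{\mathcal R}\mbox{-}{\rm Flat}\rangle$ and the rest of the correspondence is carried abstractly by the 2-category anti-equivalence), are accurate.
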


The (elementary) dual (in the sense of Section \ref{secdual}) to this is the following, where ${\rm Abs}\mbox{-}{\mathcal R}$ is the category of absolutely pure right ${\cal R}$-modules (for more on these see Example \ref{flatabsex}).

\begin{prop} \label{cohexgendual}\marginpar{cohexgendual} \cite[7.2]{PreAxtFlat} If $ {\mathcal R} $ is any skeletally small preadditive category then $${\rm Ex}({\mathcal A}({\mathcal R})^{\rm op},{\bf Ab})\simeq \langle  {\rm Abs}\mbox{-}{\mathcal R}\rangle  .$$ If $ {\mathcal R} $ is right coherent, so $ {\rm Abs}\mbox{-}{\mathcal R}$ is a definable subcategory of $ {\rm Mod}\mbox{-}{\mathcal R}$, then $${\rm Ex}(({\rm mod}\mbox{-}{\mathcal R})^{\rm op},{\bf Ab})\simeq {\rm Abs}\mbox{-}{\mathcal R}.$$
\end{prop}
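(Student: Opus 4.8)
The plan is to deduce this proposition from Proposition~\ref{cohexgen} by applying elementary duality, since the statement here is precisely the (elementary) dual of that one. Write ${\cal D} = {\rm Ex}({\cal A}({\cal R}), {\bf Ab})$ for the definable category appearing on the left of \ref{cohexgen}, noting that ${\cal A}({\cal R})$ is a skeletally small abelian category and hence an object of ${\mathbb A}{\mathbb B}{\mathbb E}{\mathbb X}$. Because ${\cal D}$ is already presented in the form ${\rm Ex}({\cal A}, {\bf Ab})$ with ${\cal A} = {\cal A}({\cal R})$, the definition of the dual recalled in Section~\ref{secdual} gives immediately that ${\cal D}^{\rm d} = {\rm Ex}({\cal A}({\cal R})^{\rm op}, {\bf Ab})$, which is exactly the category on the left of the equivalence we want to prove. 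On the other hand, \ref{cohexgen} tells us that ${\cal D} \simeq \langle {\cal R}\mbox{-}{\rm Flat}\rangle$, and since the dual is the involution on the anti-equivalent $2$-categories ${\mathbb A}{\mathbb B}{\mathbb E}{\mathbb X}$ and ${\mathbb D}{\mathbb E}{\mathbb F}$ of Theorem~\ref{2cats} (coming from $\mathcal{A}\mapsto \mathcal{A}^{\rm op}$ on the abelian side), it respects equivalences, so ${\cal D}^{\rm d} \simeq \langle {\cal R}\mbox{-}{\rm Flat}\rangle^{\rm d}$. Combining these, the proposition reduces to the single identification $\langle {\cal R}\mbox{-}{\rm Flat}\rangle^{\rm d} \simeq \langle {\rm Abs}\mbox{-}{\cal R}\rangle$.

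This identification is where the genuine content lies, everything above being a formal manipulation of the duality framework, and I expect it to be the main obstacle. I would prove it using the pp-pair descriptions of the two definable categories together with the action of elementary duality on pp-pairs (the level at which, as noted in Section~\ref{secdual}, this duality was first established). Concretely, the flat left ${\cal R}$-modules are carved out of ${\cal R}\mbox{-}{\rm Mod}$ by the vanishing of a set of pp-pairs expressing ${\rm Tor}$-vanishing against finitely presented right modules, while the absolutely pure right modules are carved out of ${\rm Mod}\mbox{-}{\cal R}$ by the vanishing of a set of pp-pairs expressing ${\rm Ext}^1$-vanishing against finitely presented modules (see Example~\ref{flatabsex}). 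Elementary duality sends each defining pp-pair of $\langle {\cal R}\mbox{-}{\rm Flat}\rangle$ to the corresponding defining pp-pair of $\langle {\rm Abs}\mbox{-}{\cal R}\rangle$, so that the two definable categories are dual to one another; this is the classical fact that elementary duality interchanges flatness and absolute purity. The one point to verify carefully is that passing to the dual commutes with taking definable closures of the respective generating classes, which I would justify via the bijection between definable subcategories of a definable category and of its dual recalled in Section~\ref{secdual}.

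Finally I would specialise to the coherent case. When ${\cal R}$ is right coherent we have ${\cal A}({\cal R}) = {\rm mod}\mbox{-}{\cal R}$, and the flat left modules already form a definable subcategory of ${\cal R}\mbox{-}{\rm Mod}$, so $\langle {\cal R}\mbox{-}{\rm Flat}\rangle = {\cal R}\mbox{-}{\rm Flat}$; dually the absolutely pure right modules form a definable subcategory of ${\rm Mod}\mbox{-}{\cal R}$ and $\langle {\rm Abs}\mbox{-}{\cal R}\rangle = {\rm Abs}\mbox{-}{\cal R}$. Substituting ${\cal A}({\cal R}) = {\rm mod}\mbox{-}{\cal R}$ together with these two identities into the equivalence obtained in the first paragraph yields ${\rm Ex}(({\rm mod}\mbox{-}{\cal R})^{\rm op}, {\bf Ab}) \simeq {\rm Abs}\mbox{-}{\cal R}$, completing the proof.
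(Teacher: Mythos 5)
Your proposal is correct and follows exactly the route the paper itself indicates: the paper offers no independent proof of \ref{cohexgendual}, presenting it simply as the elementary dual of \ref{cohexgen} (with a citation to \cite{PreAxtFlat}), which is precisely your reduction to $\langle {\cal R}\mbox{-}{\rm Flat}\rangle^{\rm d}\simeq\langle{\rm Abs}\mbox{-}{\cal R}\rangle$ via the classical fact that elementary duality interchanges flatness and absolute purity.
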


\section{Examples}\label{secexs}\marginpar{secexs}

Here are some examples to illustrate the definitions and results we have given.

\begin{example}\label{me}\marginpar{me} {\bf Morita equivalence:} is a type of interpretation functor.  Consider, for example, the equivalence, for any ring $R$, between $R$-modules and modules over the ring $M_n(R)$ of $n\times n$ matrices over $R$.  This takes an $R$-module $M$ to the direct sum $M^n$ regarded naturally as an $M_n(R)$-module.  The corresponding exact functor (in the other direction) between categories of pp-pairs takes the ``home sort" of $M_n(R)$ (that is, the usual, defining, sort for modules regarded as 1-sorted structures) to the $n$-th power of the home sort $(R,-)$ for $R$.  Essentially this is the identity functor since the image of that full and faithful functor is all of ${\mathbb L}_{R^{\rm op}}^{\rm eq+}$.  That is, this is really just making an alternative choice of home sort:  $(R^n,-)$ in place of $(R,-)$ in this example, $(P,-)$ where $P$ is a finitely generated projective generator of $R\mbox{-}{\rm Mod}$ in general.
\end{example}

\begin{example}\label{tilting}\marginpar{tilting} {\bf Tilting:}
Tilting is a relation between module (and other) categories much weaker than Morita equivalence.  A right module $T_R$ is a (classical 1-){\bf tilting module} if it has projective dimension $\leq 1$, if ${\rm Ext}^{1}_{R} (T,T) = 0$ and if there is an embedding of $R$ into a direct sum of copies of $T$ such that the factor module is a direct summand of a direct sum of copies of $T$.  Set $S = {\rm End}(T_{R})$; then $_{S}T$ is tilting and $R = {\rm End}(_{S}T)$.  If we impose the mild finiteness condition that $T_{R}$ and $_{S}T$ are ${\rm FP}_{2}$ (have a projective presentation with the first three terms finitely generated)  then, \cite[4.5]{PreInterp}, the following subcategories of ${\rm Mod}\mbox{-}R$:

 ${\cal F}(T) = \{ X_{R} : {\rm Hom}_{R} (T,X) = 0 \} $;

${\cal G}(T) = \{ X_{R} : {\rm Ext}^{1}_{R} (T,X) = 0 \} $

\noindent and the following subcategories of ${\rm Mod}\mbox{-}S$:

${\cal Y}(T) = \{ N_{S} : {\rm Tor}^{R}_{1} (_{S}T , N_{S}) = 0 \} $;

${\cal X}(T) = \{ N_{S}  : N_{S} \otimes T = 0 \} $

\noindent are all definable subcategories.

Moreover, under this assumption, the standard inverse pairs of tilting equivalences, ${\rm Hom}_{R} (_{S} T_{R},-):{\cal G}(T) \rightarrow {\cal Y}(T)$ with inverse $-\otimes_{S} T_{R} $, and ${\rm Ext}^{1}_{R} (_{S} T_{R},-):{\cal F}(T) \rightarrow {\cal X}(T)$ with inverse ${\rm Tor}^{R}_{1} (_{S} T_{R},-)$, are interpretation functors.  (More generally, see \cite[10.2.35]{PreNBK}, if $T$ is an $(S,R)$-bimodule then, provided $T_R$ is FP$_{n+1}$ (has a projective presentation, the first $n+2$ terms of which are finitely generated), the functor ${\rm Ext}_R^n(T,-): {\rm Mod}\mbox{-}R \rightarrow {\rm Mod}\mbox{-}S$ is an interpretation functor, and provided $_ST$ is FP$_{n+1}$ the functor ${\rm Tor}^S_n:{\rm Mod}\mbox{-}S \rightarrow {\rm Mod}\mbox{-}R$ is an interpretation functor.)
\end{example}

\begin{example}\label{KXinD4}\marginpar{KXinD4} {\bf 4-subspace representations:} Let ${\mathcal C}={\rm Mod}\mbox{-}K[T]$ and let ${\mathcal D}$ be the category of representations of the quiver $\widetilde{D_4}$ shown. $\xymatrix{1 \ar[dr] & & 2 \ar[dl] \\ & 0 \\ 4 \ar[ur] & & 3 \ar[ul]}$  Let $I:{\mathcal C} \rightarrow {\mathcal D}$ be defined on a $k[T]$-module $M$ by taking it to the representation  $\xymatrix{M\oplus 0 \ar[dr] & & 0\oplus M \ar[dl] \\ & M\oplus M \\ {\rm Gr}(T) \ar[ur] & & \Delta \ar[ul]}$

\noindent where the maps are inclusions into the direct sum $M\oplus M$, with $\Delta =\{(m,m): m\in M\}$ the diagonal and ${\rm Gr}(T)=\{ (m, mT): m\in M\}$ the graph of multiplication-by-$T$.  The action of $I$ on morphisms is the obvious one and it is clear that this is a functor which commutes with direct products and direct limits, hence an interpretation functor.

The image of $I$ is (e.g.~using the rest of this paragraph) a definable subcategory.  Given a $\widetilde{D_4}$-representation $N$ in the image of $I$, we can recover the $k[T]$-module that it came from as follows.  For the underlying vectorspace we take $V=e_1N$ and the action is given, in words, as follows.  Let $a\in e_1N$; there is some (unique, note) $b\in e_2N$ such that $a+b \in e_4N$ (here we are identifying the $e_iN$ with their images in $e_0N$, just to simplify notation).  Then there is a unique $c\in e_1N$ such that $c+b\in e_3N$; we define $Ta=c$.  It is easy to check that $N$ is isomorphic to the image under $I$ of this $K[T]$-module.  All this can be expressed using pp formulas.  That is, we have just defined an interpretation functor $J$ going back from the image of $I$ to $K[T]\mbox{-}{\rm Mod}$ which is an inverse for $I$.

This can be found in Baur's paper \cite[p.~244]{Baur4}.
\end{example}

\begin{example}\label{flatabsex}\marginpar{flatabsex} {\bf Flat modules:} Take $R$ to be a ring (although all this works for ${\cal R}$ being a small preadditive category).  The definable category, $\langle R\mbox{-}{\rm Flat}\rangle$, generated by the left module $_RR$ includes all flat modules and equals the category of flat modules iff $R$ is right coherent.  Dually, the definable category generated by the injective right $R$-modules includes all absolutely pure (=fp-injective) modules, where a module is {\bf absolutely pure} if it is a pure submodule of every containing module; it contains exactly these modules iff the ring is right coherent. (Essentially these results go back to Eklof and Sabbagh \cite{EkSab}, \cite{SabEk}.)  These two categories are dual in the sense of Section \ref{secdual}.  Their categories of pp-imaginaries, see \ref{cohexgen} and \ref{cohexgendual}, are obtained as follows.

Consider the functor from $ {\rm Ab}(R)=(R\mbox{-}{\rm mod}, {\bf Ab})^{\rm fp} $ to $ {\rm Mod}\mbox{-}R $ which is evaluation at $R$: $ F\mapsto F(_R R) $ (the right $R$-module structure on $R$ induces the right $R$-module structure on $F(_RR)$.  We define $ {\mathcal A}(R) $ to be the image of this functor; it is, see \cite[\S6]{PreRajShv}, the smallest abelian, not necessarily full, subcategory of $ {\rm Mod}\mbox{-}R $ which contains the full subcategory of finitely presented modules.  (The objects of ${\mathcal A}({\mathcal R})$ are, \cite[6.4]{PreRajShv}, exactly the kernels of morphisms between finitely presented modules.)
\end{example}

\begin{example}\label{inj}\marginpar{inj} {\bf Injective modules:}  The category of injective right $R$-modules is a definable subcategory of ${\rm Mod}\mbox{-}R$ iff $R$ is right noetherian.  Taking $R={\mathbb Z}$ we get the category of direct sums of copies of Pr\"{u}fer modules ${\mathbb Z}_{p^\infty}$ ($p$ a nonzero prime) and ${\mathbb Q}$.  This is a definable category with no finitely presented object apart from $0$ (see \cite[18.1.1]{PreNBK}).  By the result stated above, the corresponding functor category $=$ category of pp-pairs is the category of finitely generated abelian groups.

The category of direct sums of copies of Pr\"{u}fer modules is itself a definable category (see Example \ref{finZ}) though not a definable subcategory of ${\rm Mod}\mbox{-}{\mathbb Z}$ - its Ziegler spectrum is not compact, so it needs a language with infinitely many sorts.  We will see in Example \ref{finZ} that its functor category $=$ category of pp-pairs is the category of finite abelian groups.
\end{example}

\begin{example}\label{vnreg}\marginpar{vnreg} {\bf Regular rings:} If $R$ is a von Neumann regular ring then, by \ref{cohexgen} since $R$ is coherent and every module is flat, the category of pp-pairs for $R\mbox{-}{\rm Mod}$ is ${\rm mod}\mbox{-}R$.
\end{example}

\begin{example}\label{finZ}\marginpar{finZ} {\bf Finite abelian groups:} Given a skeletally small abelian category, we may ask for which definable category is it the functor category?  For instance, take the category, ${\rm fin}\mbox{-}{\mathbb Z}$, of finite abelian groups; can we identify ${\mathcal D} ={\rm Ex}({\rm fin}\mbox{-}{\mathbb Z},{\bf Ab})$?  We will do this somewhat indirectly.

First, note that ${\rm fin}\mbox{-}{\mathbb Z}$ is a Serre subcategory of the category, ${\rm mod}\mbox{-}{\mathbb Z}$, of finitely generated abelian groups.  We will use the following result of Krause, where ${\rm Pinj}({\mathcal D})$ denotes the full subcategory on the pure-injective objects of a definable category ${\mathcal D}$.

\begin{theorem}\label{defquot}\marginpar{defquot} (\cite[5.1]{KraEx}) Suppose that ${\mathcal A}$ is a small abelian category and that ${\mathcal S}$ is a Serre subcategory.  Consider the definable categories ${\mathcal D} ={\rm Ex}({\mathcal A}, {\bf Ab})$, ${\mathcal D}'' ={\rm Ex}({\mathcal S}, {\bf Ab})$, and ${\mathcal D}' ={\rm Ex}({\mathcal A}/{\mathcal S}, {\bf Ab})$ (so ${\mathcal D}'$ is a definable subcategory of ${\mathcal D}$ and ${\mathcal D}''$ is what Krause terms a {\bf definable quotient category} of ${\mathcal D}$).

Then ${\rm Pinj}({\mathcal D}'') \simeq {\rm Pinj}({\mathcal D})/{\rm Pinj}({\mathcal D}')$ where the latter is the stable quotient category of ${\rm Pinj}({\mathcal D})$ with respect to ${\rm Pinj}({\mathcal D}')$ - that is, the objects are those of ${\rm Pinj}({\mathcal D})$ and the morphisms are those of ${\rm Pinj}({\mathcal D})$ modulo those which factor through an object of ${\rm Pinj}({\mathcal D}')$.
\end{theorem}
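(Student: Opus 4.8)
The plan is to translate the statement into one about injective objects in a localisation sequence of locally coherent Grothendieck categories, after which the ``stable quotient'' description becomes a transparent splitting argument. First I would use the anti-equivalence of Theorem \ref{2cats} together with the localisation correspondence of Theorem \ref{ppsortquotpp} to read off the three functor categories: $\mathrm{fun}(\mathcal{D}) = \mathcal{A}$, $\mathrm{fun}(\mathcal{D}') = \mathcal{A}/\mathcal{S}$ and $\mathrm{fun}(\mathcal{D}'') = \mathcal{S}$. The inclusion $\mathcal{D}' \hookrightarrow \mathcal{D}$ of the definable subcategory is the interpretation functor dual to the quotient $\mathcal{A} \to \mathcal{A}/\mathcal{S}$, while the inclusion $\mathcal{S} \hookrightarrow \mathcal{A}$ is dual to a restriction interpretation functor $r : \mathcal{D} \to \mathcal{D}''$ (restrict an exact functor on $\mathcal{A}$ to $\mathcal{S}$). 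Being an interpretation functor, $r$ preserves pure-injectivity, and every $N \in \mathcal{D}'$, viewed in $\mathcal{D}$, kills $\mathcal{S}$, so $rN = 0$; hence $r$ induces a functor $\mathrm{Pinj}(\mathcal{D})/\mathrm{Pinj}(\mathcal{D}') \to \mathrm{Pinj}(\mathcal{D}'')$, and the theorem asserts that this is an equivalence.

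Next I would pass, following Krause's framework in \cite{KraEx}, to the locally coherent Grothendieck category $\mathbf{C}$ with $\mathbf{C}^{\mathrm{fp}} \simeq \mathcal{A}$ whose injective objects are exactly the pure-injectives of $\mathcal{D}$, so that $\mathrm{Pinj}(\mathcal{D}) \simeq \mathrm{Inj}(\mathbf{C})$. (That $\mathcal{S}$ is equally a Serre subcategory of $\mathbf{C}^{\mathrm{fp}}$ whichever variance one chooses is exactly the remark opening Section \ref{secdual}.) The Serre subcategory $\mathcal{S} \subseteq \mathbf{C}^{\mathrm{fp}}$ generates a hereditary torsion theory of finite type, that is, a localising subcategory $\mathcal{T} \subseteq \mathbf{C}$ with $\mathcal{T}^{\mathrm{fp}} = \mathcal{S}$ and $(\mathbf{C}/\mathcal{T})^{\mathrm{fp}} = \mathcal{A}/\mathcal{S}$. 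Matching finitely presented objects identifies $\mathcal{T}$ as the Grothendieck category attached to $\mathcal{D}''$ and $\mathbf{C}/\mathcal{T}$ as that attached to $\mathcal{D}'$; concretely $\mathrm{Pinj}(\mathcal{D}'') \simeq \mathrm{Inj}(\mathcal{T})$, the torsion injectives of $\mathbf{C}$, and, via the section functor right adjoint to localisation, $\mathrm{Pinj}(\mathcal{D}') \simeq \mathrm{Inj}(\mathbf{C}/\mathcal{T})$, the torsionfree injectives of $\mathbf{C}$.

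The proof then reduces to a splitting computation, in which the functor induced by $r$ is the torsion radical $E \mapsto tE$. Because the torsion theory is hereditary, the torsion subobject $tE$ of any injective $E$ is itself injective, hence a direct summand: $E = tE \oplus E_f$ with $tE$ torsion injective and $E_f$ torsionfree injective. For injective $E, E'$ the group $\mathrm{Hom}_{\mathbf{C}}(E, E')$ splits into four components; $\mathrm{Hom}(tE, E'_f) = 0$ since maps from torsion to torsionfree vanish, and the three components involving $E_f$ or $E'_f$ all factor through a torsionfree injective, hence die in the stable quotient; meanwhile a nonzero map $tE \to tE'$ cannot factor through any torsionfree object, since its restriction to the torsion source would vanish, so the residual component $\mathrm{Hom}(tE, tE')$ injects. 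Thus the stable quotient of $\mathrm{Hom}(E,E')$ is exactly $\mathrm{Hom}_{\mathbf{C}}(tE, tE') = \mathrm{Hom}_{\mathcal{T}}(tE, tE')$, giving fullness and faithfulness, and essential surjectivity follows since an injective of $\mathcal{T}$ is a torsion injective of $\mathbf{C}$ and so equals its own $tE$.

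The main obstacle is the bookkeeping of the second paragraph: correctly pinning down the associated Grothendieck category $\mathbf{C}$ and verifying that the three definable categories line up with $\mathbf{C}$, its torsion subcategory $\mathcal{T}$ and its quotient $\mathbf{C}/\mathcal{T}$, so that $\mathrm{Pinj}(\mathcal{D}')$ and $\mathrm{Pinj}(\mathcal{D}'')$ become respectively the torsionfree and the torsion injectives. In particular one must check that the torsion theory generated by a Serre subcategory of $\mathbf{C}^{\mathrm{fp}}$ is hereditary and of finite type, which is precisely what guarantees the splitting $E = tE \oplus E_f$; once that infrastructure is in place, the Hom-level computation is routine.
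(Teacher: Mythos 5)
The paper does not prove this statement (it is quoted from Krause), so I am assessing your argument directly. Your first two paragraphs are sound: the identifications $\mathrm{fun}({\mathcal D})={\mathcal A}$, $\mathrm{fun}({\mathcal D}')={\mathcal A}/{\mathcal S}$, $\mathrm{fun}({\mathcal D}'')={\mathcal S}$, the induced restriction functor $r$, the passage to the locally coherent category ${\mathbf C}$ with ${\mathbf C}^{\mathrm{fp}}\simeq{\mathcal A}$ and $\mathrm{Pinj}({\mathcal D})\simeq\mathrm{Inj}({\mathbf C})$, and the finite-type localising subcategory ${\mathcal T}=\vec{{\mathcal S}}$ with $\mathrm{Pinj}({\mathcal D}'')\simeq\mathrm{Inj}({\mathcal T})$ and $\mathrm{Pinj}({\mathcal D}')\simeq\mathrm{Inj}({\mathbf C}/{\mathcal T})$ (the torsionfree injectives) are all correct and are the right infrastructure. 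The gap is in the third paragraph: for a hereditary torsion theory the torsion subobject $tE$ of an injective $E$ need \emph{not} be injective in ${\mathbf C}$, so the splitting $E=tE\oplus E_f$ fails. What hereditariness gives you is that the \emph{torsionfree} class is closed under injective hulls; closure of the \emph{torsion} class under injective hulls is the strictly stronger condition of stability, which does not hold here in general. Concretely, take ${\mathcal D}=KA_2\mbox{-}{\rm Mod}$ as in Example \ref{A2langloc} and let ${\mathcal S}$ be the Serre subcategory of ${\mathcal A}=\mathrm{fun}({\mathcal D})$ generated by the simple $Q_1$: its injective hull in ${\mathbf C}$ is the indecomposable $Q_2=J_1$, which is not torsion, so $tJ_1=Q_1$ is not a summand of $J_1$ and there are no nonzero torsion injectives in ${\mathbf C}$ at all — your identification of $\mathrm{Inj}({\mathcal T})$ with the torsion injectives of ${\mathbf C}$ would force $\mathrm{Pinj}({\mathcal D}'')=0$, whereas it is $\mathrm{Mod}\mbox{-}K$.

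The argument can be repaired along the same lines. The correct statements are: $\mathrm{Inj}({\mathcal T})=\{tE: E\in\mathrm{Inj}({\mathbf C})\}$ (a torsion object $T$ is injective in ${\mathcal T}$ iff $T=tE(T)$, and conversely any map in ${\mathcal T}$ into $tE$ extends through $E$ and lands back in $tE$ because images of torsion objects are torsion), and the decomposition of an injective is $E=E(tE)\oplus E_f$ with $E_f$ torsionfree, $E(tE)$ usually not torsion. The functor to use is still $E\mapsto tE$. Essential surjectivity and fullness are as just indicated (extend $tE\to tE'\hookrightarrow E'$ over $tE\hookrightarrow E$). For faithfulness you must show that $h:E\to E'$ with $h\!\upharpoonright_{tE}=0$ factors through a torsionfree injective: $h$ factors through $E/tE$, which is torsionfree, and then through its injective hull $E(E/tE)$, which is torsionfree injective precisely because the torsion theory is hereditary; the converse direction ($\mathrm{Hom}(\mbox{torsion},\mbox{torsionfree})=0$) you already have. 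With these corrections the proof goes through; as written, the key step is false.
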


In fact, we will apply this to the inclusion $({\rm fin}\mbox{-}{\mathbb Z})^{\rm op} \rightarrow ({\rm mod}\mbox{-}{\mathbb Z})^{\rm op}$ of opposite categories since the computation is a little easier and \ref{defquot} gives us the whole definable category (all the modules in it being injective, in particular pure-injective).

The definable category corresponding to $({\rm mod}\mbox{-}{\mathbb Z})^{\rm op}$ is, by \ref{cohexgendual}, ${\rm Abs}\mbox{-}{\mathbb Z} = {\rm Inj}\mbox{-}{\mathbb Z}$ - the category of injective=divisible abelian groups, with ${\rm Mod}\mbox{-}{\mathbb Q}$ being its definable subcategory annihilated by $({\rm fin}\mbox{-}{\mathbb Z})^{\rm op}$.  So, by \ref{defquot}, the category ${\rm Ex}(({\rm fin}\mbox{-}{\mathbb Z})^{\rm op}, {\bf Ab})$ is the category ${\rm Inj}\mbox{-}{\mathbb Z}/{\rm Mod}\mbox{-}{\mathbb Q}$ of divisible abelian groups modulo the category of ${\mathbb Q}$-modules.  There are no non-zero morphisms from torsion modules to torsionfree ones, so this is just the category of torsion divisible abelian groups - arbitrary direct sums of copies of Pr\"{u}fer modules.

Going back to the original, opposite, categories, we have the dual definable categories and, using that a definable category is determined by its pure-injectives (\cite[3.18]{PreADC}), we conclude that ${\rm Ex}({\rm fin}\mbox{-}{\mathbb Z},{\bf Ab})$ is the category of reduced (i.e.~without nonzero divisible submodules) torsionfree abelian groups.
\end{example}

\begin{example} \label{Noriex}
{\bf Nori motives:}  Let ${\cal V}$ be a category of varieties, for example the category ${\cal V}_k$ of nonsingular projective varieties and regular maps over an algebraically closed field $k$. Roughly, the corresponding category of motives is a small abelian category built from ${\cal V}$ through which all (co)homology theories, regarded as functors from ${\cal V}$ to an abelian category, factor (essentially uniquely).  For ${\cal V}_k$ there are constructions of ``pure motives" whose properties depend on the (open) Grothendieck Standard Conjectures.  The pure motives are also expected to be the semisimple objects of the category of ``mixed motives" (corresponding to the category of all varieties over $k$); that category is expected to be recovered, as the heart of a motivic t-structure, from the category of Voevodsky ``triangulated motives".

Nori constructed a quiver $D_k$, the vertices of which are the $(X,Y,i)$ consisting of varieties $X,Y$ with $Y$ a closed subvariety of $X$, $i\in {\mathbb Z}, i\geq 0$. So this is a large quiver, in particular its path category is a ring in the broad (many-object) sense.  The arrows of $D_k$ are defined in such a way that (co)homology theories $T$ are representations of this quiver.  Nori then gave a (rather indirect) construction of an associated abelian category of motives for $T$, in particular taking $T$ to be singular homology, $(X,Y,i) \mapsto {\rm H}^i(X({\mathbb C}),Y({\mathbb C}))$ to obtain the category of Nori motives.

Barbieri-Viale suggested that there should be a topos-theoretic construction of Nori's category, Caramello saw how to do this by building the syntactic category associated to a regular theory and the details are presented in \cite{BVCL}.  In fact, the category that Nori constructs can be seen to have the universal property of the category ${\cal A}(T)$ where $T$ is the representation of $D_k$ given by singular homology, so this, and the more general $T$-motives from \cite{BV}, are examples, of a rather different kind, of what we have described, see \cite{BVP}.
\end{example}

\section{Model theory and interpretations}\label{secmodth}\marginpar{secmodth}

\noindent {\bf Many-sorted structures}  The idea is that structures may have elements of essentially different kinds or, at least, in completely disjoint parts of the structure.  Representations of quivers are natural examples:  the vector spaces corresponding to distinct vertices are such separate parts; the term `sorts' refers to these `parts'.  Even in the case of a module\footnote{In the earlier parts of this paper, I used mostly left modules but, in this section I will use {\em right} $S$-modules, that is $S^{\rm op}$-modules, to avoid too many $^{\rm op}$s appearing elsewhere in the discussion.} $M_R$ over a ring $R$, which is naturally 1-sorted, we may consider $n$-tuples of elements of $M$ as elements of a new sort - that of $n$-tuples - (one for each $n$), thus turning $M$ into a multi-sorted structure.

But why stop there?  We can add, as new sorts, pp-definable subgroups $\phi(M)$ of $M$ and of its finite powers; we can also add quotients of these by pp-definable equivalence relations.  We're restricting to pp formulas in order that all sorts inherit the additive structure, and factoring by pp-definable equivalence relations comes down to forming factor groups $\phi(M)/\psi(M)$.

There are also many definable functions connecting these sorts:  coordinate injections and projections in the case of product sorts, the canonical projection from $\phi(M)$ to $\phi(M)/\psi(M)$, but many more as we saw even in the example in Section \ref{secKT}.

The resulting (very-)many-sorted structure is denoted by $M^{\rm eq+}$.  Since `$M$' is a variable, we should consider the framework behind this.  That framework, seen earlier, is that where we take the sorts $\phi(-)/\psi(-)$ to be objects and the pp-definable maps\footnote{That is, pp-definable on {\em every} $R$-module, equivalently pp-definable on $M$ if the definable subcategory generated by $M$ is ${\rm Mod}\mbox{-}R$; if we meant just pp-definable on $M$ then the resulting category would be a Serre quotient of ${\mathbb L}_R^{\rm eq+},$ see Section \ref{secloc}.} to be arrows.  This is an abelian category which, from this point of view, is denoted ${\mathbb L}_R^{\rm eq+}$ (and which has alternative descriptions, as in \ref{equiv3}).  Then applying a module $M$ as an argument is the functor `evaluation-at-$M$' from ${\mathbb L}_R^{\rm eq+}$ to ${\bf Ab}$ and, as such, it is exact (and was denoted $\widetilde{M}$ in \ref{freeabcat}).  Equally, as we have seen, each object in ${\mathbb L}_R^{\rm eq+}$ can be regarded as a functor from ${\rm Mod}\mbox{-}R$ to ${\bf Ab}$

Now, given $M$, each one of these new sorts $N=\phi(M)/\psi(M)$ of elements is a right module over the ring $S={\rm End}_{{\mathbb L}_R^{\rm eq+}}(\phi/\psi)$ of pp-definable endomorphisms of the object $\phi/\psi$ in ${\mathbb L}_R^{\rm eq+}$.  We think of this $S$-module as ``definably contained", or ``interpreted" in $M_R$.  We could also, of course, from the $S$-module $N$ build the structure $N^{\rm eq+}$.  It might be that, within this structure, we could find a copy of the $R$-module $M$ by taking a sort of $N^{\rm eq+}$ with its definable endomorphisms.  In that case we would regard $M_R$ and $N_S$ as definably equivalent\footnote{``Interpretation-equivalent" would fit better with standard terminology in model theory, where it is customary to distinguish definability (taking only definable subsets) and interpretability (also allowing factoring by definable equivalence relations).  For our purposes, the distinction seems not worth making.} since each can be constructed/found definably within the other.  This is the relation of ``being essentially the same" that we obtain for modules if we define them to be exact functors on abelian categories.  Since we allow modules over `multi-sorted rings', that is small preadditive categories, we might use more than one sort when we build new modules inside $^{\rm eq+}$ of a module.  Also, if we just want the weaker relation of being definably contained/interpreted in (rather than definable equivalence) we don't, when interpreting new structures, have to take all the arrows between the chosen sorts in ${\mathbb L}_R^{\rm eq+}$.  That is, we can use any not-necessarily-full subcategory when interpreting new modules in old.

The module $M$ appearing above serves only to fix the definable category ${\cal D}$ (whether it be ${\rm Mod}\mbox{-}R$ or $\langle M \rangle$), so our choice of the sort $\phi/\psi$ gives us a functor from ${\cal D}$ to ${\rm Mod}\mbox{-}S$, $S={\rm End}(\phi/\psi)$, which also takes each module $M' \in {\cal D}$ to the $S$-module $\phi(M')/\psi(M')$.  This functor will commute with direct limits and direct products.  That is, it is an interpretation functor in the sense we defined earlier algebraically (Section \ref{secinterp}) but which can also be defined in this model-theoretic way.  This explains the terminology ``interpretation functor".

We can also see why, though the interpretation functor above goes from ${\cal D}$ to ${\rm Mod}\mbox{-}S$, the exact functor between the associated functor categories goes the other way.  For, we found $S$ as (a subring of) ${\rm End}_{{\rm fun}({\cal D})}(\phi/\psi)$ in ${\rm fun}({\cal D})$.  That is, we have an additive functor from the preadditive category $S$ to ${\rm fun}({\cal D})$.  By \ref{freeabcat} that extends to an exact functor from ${\rm Ab}(S^{\rm op}) = {\rm fun}({\rm Mod}\mbox{-}S)$ to ${\rm fun}({\cal D})$.  The meaning is that, once we have interpreted the action of $S$ on modules $M'\in {\cal D}$, the action of every sort of ${\mathbb L}_S^{\rm eq+}$ is thereby determined.

Of course, what we have just said about interpretation functors for modules over 1-sorted rings applies equally for modules over general small preadditive categories.

\vspace{4pt}

That pretty well wraps up what I want to say here.  The above discussion of interpretations does, however, depend on the reader having read the rest of the paper so, below, is an independent, and more general, run through the relevant ideas of many-sorted structures, languages and interpretations.

\subsection{Languages, structures and interpretations}

\noindent {\bf The idea}  of an interpretation is, roughly, that B is interpretable in A if we can find B definably within A or, to put it another way, if we can see how to manufacture B from A in a definable way. An example from undergraduate mathematics, the formation of the ring $ {\mathbb Q} $ from the ring $ {\mathbb Z}$, illustrates this quite well: we take the set, $ {\mathbb Z}^2$, of pairs of integers, at least those where the second coordinate is non-zero, and define an equivalence relation on those by $ (a,b)\sim (c,d) $ if $ad=bc $ (a ``definable" relation in the formal sense). We then factor $ {\mathbb Z}^2 $ by this relation, to get the domain of $ {\mathbb Q} $.  Then we define the addition and multiplication on $ {\mathbb Q} $ in terms of representatives of equivalence classes in the obvious (again, formally definable) way. What we did there was to produce a new kind of element: starting with the usual elements of $ {\mathbb Z} $ - the elements of the ``home sort" - we formed the sort of ordered pairs, moved to the definable subset consisting of those where the second coordinate is non-zero, and then factored by a definable equivalence relation to get a new sort which was to be the domain of the new structure. Then we equipped the new sort with a couple of functions (and we could have named the constants $ 0 $ and $ 1 $ as well), again all done definably in terms of the original structure.

In summary, we can produce new sorts by taking definable subsets of existing ones, by forming finite products of sorts and by factoring sorts by definable equivalence relations.  If we take a structure $ M $ and close it under these ways of forming new sorts, we get a collection of sorts of elements, usually denoted by $ M^{\rm eq}$. This is not just a collection of sets: there are maps connecting these sorts, for example the projections and injections between product sorts and component sorts, and also the map from a sort to its quotient by a given definable equivalence relation. In general, there are many additional definable maps between sorts and the most complete version of this structure, which we still denote $ M^{\rm eq}$, can naturally be regarded as a category, with the sorts as objects and the definable maps as morphisms.  (In that sentence we blurred the distinction between the framework - the category - and the value of this at a particular structure $M$:  for example, the construction above can be applied to any commutative domain.  Once we give the definitions, however, the difference can be clarified.)

\vspace{4pt}

\noindent {\bf The details:}
The meaning of ``definable" above is precise:  it refers to definability in some formal language appropriate for the structures involved.  Suppose that $ A $ is a structure of some, possibly many-sorted, {\bf signature}; that means a specified set of sorts (disjoint containers for elements) and a set of function, relation and constants symbols where the sorts (of each domain variable, of the value variable in the case of a function and of each constant symbol) are specified. Suppose also that $ B $ is a structure of some signature, in general totally different from that of $ A$. An {\bf interpretation} of $ B $ in $ A $ consists of the following data: for each sort $ \sigma  $ in the signature of $ B$, a sort $ \tau (\sigma ) $ in the language (built from the signature\footnote{Given a signature, we can, recursively using the function symbols, build terms from variables and any constants symbols.  Then, using equality and any other relations, and then logical connectives and quantifiers, we build formulas of the language based on that signature.  The notation for formulas like $\phi_f$, $\phi_R$ shows the free=unquantified=substitutable variables occurring in that formula.}) of $ A^{\rm eq}$; for each function symbol $ f $ in the signature of $ B $ and with domain sort $ \sigma _1\times\dots\times \sigma _n $ and value sort $ \sigma $, a formula $ \phi _f(x'_{\tau (\sigma _1)},x''_{\tau (\sigma _2)},\dots,x^{(n)}_{\tau (\sigma _n)},x_{\tau (\sigma )})$ in the language for $A$, where subscripts indicate the sorts of variables; for each relation symbol $ R $ in the signature of $ B $ and with sort $ \sigma _1\times \dots\times \sigma _n$, a formula $ \phi _R(x'_{\tau (\sigma _1)},x''_{\tau (\sigma _2)}\dots,x^{(n)}_{\tau (\sigma _n)})$; for each constant symbol $ c$, of sort $ \sigma  $ say, in the signature of $ B$, a formula $ \phi _c(x_{\tau (\sigma )})$, the formulas all in the language for $A^{\rm eq}$.\footnote{Any formula in this expanded language is equivalent to some formula written using the original signature for $A$; the new sorts for imaginaries are convenient but not, at this point, essential.}

The conditions on this data are the obvious ones: first that $ \phi _f $ should define a function from the product sort $ \tau (\sigma _1)\times \dots \times \tau (\sigma _n) $ to the sort $ \tau (\sigma ) $ and that $ \phi _c $ should define a single element; these conditions may be written as sentences in the language for $ A$; write $ T_\tau  $ for this set of sentences.  Finally we require that $B$ should be isomorphic to the structure for the signature of $ B $ which has, at sort $ \sigma  $, the set $ \tau (\sigma )(A) $ and which has, for the interpretations of the function, relation and constants symbols of that signature, the functions, relations and constants defined in $ A $ by the corresponding $ \phi $-formulas.  That is, we can definably find, or interpret, $B$ within $A$.
\vspace{4pt}

\noindent {\bf The extent of an interpretation:} This was a description of how a single structure may be interpreted in another, but it is clear that in any structure $ A' $ satisfying all the sentences in $ T_\tau $ the data of the interpretation allows one to interpret in $ A' $ some structure $ B' $ for the signature of $ B$.

\vspace{4pt}

\noindent {\bf Contravariance:} Note that the data of the interpretation goes in one direction (it is a function from something associated to $ B $ to something associated to $ A$) whereas the actual interpretation goes in the other direction (from some structures for the signature of $ A $ to some structures for that of $ B$).

\vspace{4pt}

\noindent {\bf The additive context/pp-definability:}  For more detail on the above, one may consult, for instance \cite[\S\S 5.3, 5.4]{Hod}.  In the additive context, because we want the underlying additive structure to be inherited by definable and interpreted sets, we restrict to pp formulas since these always define subgroups.  The restriction also has model-theoretic justification in the light of pp-elimination of quantifiers for modules, e.g.~see \cite[2.13]{PreBk}.\footnote{This says roughly that formulas in the language for modules over any ring reduce to simple combinations of what one can say with pp formulas.}  The notation $^{\rm eq+}$ is used in place of $^{\rm eq}$ to indicate this difference.

Also, the languages that we have used in this paper are functional - no constants other than the (definable) $ 0 $ element in each sort and no relation symbols (other than equality). The exclusion of relation symbols can be got around.  For instance if we want to look at structures consisting of a $K$-vector space and a specified subspace, then it would be natural to introduce a 1-ary relation symbol to pick out the subspace.  But an alternative is to regard such structures as representations of the quiver $A_2$ (those where the kernel of the arrow is 0 - these form a definable subcategory of the category of $K$-representations of $A_2$).  This kind of ``coding-up" of relations by introducing new sorts and function symbols can be done in general, see \cite[p.\,703]{KP1}. Not so for non-zero constants: they cannot be coded up in this way and permitting them {\em in the formal language} would move us from an additive to an affine world which is not very well-explored and where it is not at all clear that as nice a theory can be developed (e.g.~see \cite{RajPreAff}).

\vspace{4pt}

\noindent {\bf The language for ${\cal R}$-modules:} Suppose that $ {\mathcal R} $ is a small preadditive category. We set up a language $ {\mathcal L}_{\mathcal R} $ with a sort $ \sigma _P $ for each object $ P $ of $ {\mathcal R} $ and with, for each morphism $ f:P\rightarrow Q$, a function symbol, which we denote just by $ f$, from sort $ \sigma _P $ to $ \sigma _Q $.  For each sort we add a binary function symbol (for addition in that sort) and a constant symbol (for the $ 0 $ of that sort). We are interested only in additive structures so we impose the axioms which state that the structure in each sort should be that of an abelian group, then we add those which state that each named function $ f $ as above should be additive, that is, a morphism of abelian groups from the group of elements of sort $ \sigma _P $ to the group of those of sort $ \sigma _Q$.  We also add axioms expressing the atomic diagram of ${\mathcal R}$; for instance if the composition (or sum) of $f$ and $g$ is $h$ then we add a formula saying so (in terms of their actions).  An $ {\mathcal L}_{\mathcal R}$-structure $ M $ which is a model of (that is, satisfies) all these axioms therefore assigns to each object $ P $ of $ {\mathcal R} $ an abelian group $ \sigma _PM $ and to each morphism $ f:P\rightarrow Q $ of $ {\mathcal R} $ a group homomorphism from $ \sigma _PM $ to $ \sigma _QM$. In other words, $ M $ is a (covariant) additive functor from $ {\mathcal R} $ to the category $ {\bf Ab} $ of abelian groups. And conversely, every such additive functor is a model of these axioms.

\vspace{4pt}

\noindent {\bf Another language for modules:}  Suppose that ${\cal C}$ is any finitely accessible additive category with products.  That is, the finitely presented objects form a skeletally small subcategory ${\cal C}^{\rm fp}$ and every object of ${\cal C}$ is a direct limit of finitely presented objects.  For example any module category ${\cal R}\mbox{-}{\rm Mod}$ is such.  We can set up a language, with a sort for each object of (a small but not necessarily skeletal version of) ${\cal C}^{\rm fp}$, a $0$ and a symbol for addition on each sort, and a function symbol, from sort $Q$ to sort $P$ for each arrow $f:P\rightarrow Q$ in ${\cal C}^{\rm fp}$.  The reason for contravariance is that we can now regard each object $C \in {\cal C}$ as a structure for this language, setting $\sigma_PC=(P,C)$ - the group of maps from $P$ to $C$.  Since every object of ${\cal C}$ is a direct limit of objects of ${\cal C}^{\rm fp}$, every object $C$ is determined by its ``elements", that is, by the morphisms from finitely presented objects to it.  These morphisms we have split into separate sorts according their domains (for more detail see, \cite[\S 18]{PreMAMS}).

In this language, a module $M$ would be many-sorted, with a sort for each finitely presented module $A$ and the group of elements of $M$ of sort $A$ being $(A,M)$.  Of course, the usual language is the restriction of this to the single sort $(R,M)$, ``usual" {\it via} the canonical way of identifying the elements of a module with the morphisms from the free module $R$ to it.  The other sorts all lie in $^{\rm eq+}$ of this single-sorted structure but it can be useful to have them there explicitly as part of a richer language.

Take, for an example of a finitely accessible category with products, the category of torsion abelian groups:  the finite abelian groups are the finitely presented objects and product in the category is the the torsion subgroup of the usual product of groups.  So the language has a sort for each finite abelian group (taking just the indecomposables would be enough, though then the category of sorts would not be an additive category) and the ``elements" of sort, say, ${\mathbb Z}_n$, of a torsion group $D$ form the group $({\mathbb Z}_n, D)$ of elements of $D$ of order dividing $n$.

\vspace{4pt}

\noindent {\bf Yet another language for modules:}  We could, of course, use the full ``pp-imaginaries" language, based on ${\mathbb L}_{\cal R}^{\rm eq+}$, with a sort for each object of that category and a function symbol for each arrow.  This contains the above languages, with each finitely presented module $A$ being replaced by the corresponding representable functor $(A,-)$ (recall, \ref{equiv3}, that ${\mathbb L}_{\cal R}^{\rm eq+}$ is equivalent to the category of finitely presented functors on finitely presented modules).  This, furthermore, is the richest possible language which still has the same expressive power as the usual (1-sorted in the case of a ring $R$) language.  ``The same expressive power" means that everything which can be said in this richer language can be said (though at greater length) with a formula from the simpler languages.  It is richest in the sense that if we were to repeat the $^{\rm eq+}$ construction, then we would obtain a category equivalent to ${\mathbb L}_{\cal R}^{\rm eq+}$, so would not have changed the language (other than adding more copies of things we already had).  More discussion of these three kinds of languages is in \cite{PreMakVol}.

\vspace{4pt}

\noindent {\bf Definable categories and purity:}  An alternative, but equivalent, definition of purity to the one given in Section \ref{secdefcat} is that an  embedding $ f:M\rightarrow N $ of $ {\mathcal R}$-modules is pure if for every pp formula $ \phi  $ and tuple $ \overline{a} $ from $ M $ (matching the free variables of $ \phi $) we have $ \overline{a}\in  \phi (M) $ iff $ \overline{fa}\in \phi (N) $ (the direction $ \Leftarrow $ being the point). This is a weakening of the definition of elementary embedding from model theory, which is this condition for {\em any} formula (not just pp formulas).

Definable subcategories of categories of modules were first studied in the context of the model theory of modules and this was reflected in the way that they were originally described.  Each definable subcategory/subclass of ${\rm Mod}\mbox{-}R$ can be realised as the class of those modules which are direct summands of models of some complete theory $T$ of modules whose class of models is closed under finite direct sums (hence also under arbitrary direct sums and direct products).  This condition on a theory $T$ is denoted $T=T^{\aleph_0}$.  By pp-elimination of quantifiers for modules, each such class is described by closure of some set of pp-pairs (condition (iv) in \ref{chardefsub}).  My recollection is that the algebraic characterisation came from discussions of Crawley-Boevey, Krause and others; the name is from \cite{C-BTrond}.

\vspace{4pt}

\noindent {\bf Interpretation of additive structures, in summary:}  The process of interpretation can be said both in model-theoretic and algebraic terms.  Model-theoretically, we start with the language ${\cal L}$ of ${\cal R}$-modules or of some more general definable category ${\cal C}$.  We expand this language to the full pp-imaginary language $ {\mathcal L}^{\rm eq+}({\cal C})$. Then we choose a subset, $ \Sigma ' $, of the expanded set of sorts and we choose a subset of the definable functions involving only these sorts and their finite products; that gives us the new, interpreted, language.  Each of our original objects $C\in {\cal C}$ has a natural expansion, $C^{{\rm eq}+}$, to the pp-imaginaries language and this, restricted to the chosen collection of sorts and function symbols, is the structure $D=FC$ interpreted in $C$.  The class of objects obtained this way might not be closed (among ``$\Sigma'$-modules") under pure subobjects but \cite[3.8]{PreADC}, after closing in this way, we obtain a definable subcategory ${\cal D}$.

Algebraically this is said as follows.  We start with a small abelian category ${\cal A}$ and we choose a not necessarily full additive subcategory ${\cal B}$.  The interpretation takes $C \in {\rm Ex}({\cal A}, {\bf Ab})$ to its restriction to ${\cal B}$.  We may as well extend $C$ to an exact functor on the abelian subcategory of ${\cal A}$ generated by ${\cal B}$ (close ${\cal B}$ under finite direct sums, kernels and cokernels).  So there is no loss in generality in taking ${\cal B}$ to be a not necessarily full abelian subcategory of ${\cal A}$.  From each exact functor $C$ on ${\cal A}$ we get an exact functor on this ${\cal B}$; we may not get all exact functors on ${\cal B}$ this way but, after closing under pure subfunctors, we do.  Set ${\cal D} ={\rm Ex}({\cal B}, {\bf Ab})$.

Thus, either way, we get an interpretation functor $F:{\cal C} \rightarrow {\cal D}$ between definable categories, and a corresponding exact functor ${\mathbb L}^{\rm eq+}({\cal D}) = {\rm fun}({\cal D}) \rightarrow {\rm fun}({\cal C}) = {\mathbb L}^{\rm eq+}({\cal C})$ between abelian categories.

\end{document}